\newtheorem{theorem}{Theorem}[section]
\theoremstyle{definition}
\newtheorem{definition}[theorem]{Definition}
\newtheorem{example}[theorem]{Example}
\newtheorem{corollary}[theorem]{Corollary}
\theoremstyle{remark}
\begin{document}

\title{On the $r$-derangements of type B}

\begin{abstract}
Extensions of a set partition obtained by imposing bounds on the size of the parts and the coloring 
of some of the elements are examined. Combinatorial properties and the generating functions 
of some counting sequences associated 
with these partitions are established.  Connections with Riordan arrays are presented.
\end{abstract}

\author{Istv\'{a}n Mezo}
\address{School of Mathematics and Statistics, Nanjing University of Information Science and
Technology, Nanjing, 210044, P.~R.~China}
\email{istvanmezo81@gmailcom}

\author{Victor H. Moll}
\address{Department of Mathematics,
Tulane University, New Orleans, LA 70118}
\email{vhm@tulane.edu}

\author{Jos\'{e}~L.~{R}am\'{i}rez}
\address{Departamento de Matem\'{a}ticas, Universidad Nacional de Colombia, Bogot\'{a},
Colombia}
\email{jlramirezr@unal.edu.co}

\author{Diego Villamizar}
\address{Department of Mathematics,
Tulane University, New Orleans, LA 70118}
\email{dvillami@tulane.edu}

\subjclass{Primary 11B83, \,\, Secondary 11B73, 0515}

\date{\today}

\keywords{signed permutations, $r$-derangements, Riordan arrays, generating functions}

\maketitle

\newcommand{\nn}{\nonumber}
\newcommand{\ba}{\begin{eqnarray}}
\newcommand{\ea}{\end{eqnarray}}
\newcommand{\dz}{\frac{d}{dz}}
\newcommand{\E}{{\mathfrak{E}}}
\newcommand{\F}{{\mathfrak{F}}}
\newcommand{\Ro}{{\mathfrak{R}}}
\newcommand{\ift}{\int_{0}^{\infty}}
\newcommand{\ifft}{\int_{- \infty}^{\infty}}
\newcommand{\no}{\noindent}
\newcommand{\X}{{\mathbb{X}}}
\newcommand{\Q}{{\mathbb{Q}}}
\newcommand{\R}{{\mathbb{R}}}
\newcommand{\Y}{{\mathbb{Y}}}
\newcommand{\Ftwo}{{{_{2}F_{1}}}}
\newcommand{\realpart}{\mathop{\rm Re}\nolimits}
\newcommand{\imagpart}{\mathop{\rm Im}\nolimits}
\newcommand{\C}{{\mathcal C}}
\newcommand{\T}{{\mathcal T}}
\newcommand{\Tc}{{\mathcal{\overline{T}}}}
\newcommand{\D}{{\mathcal D}}
\newcommand{\A}{{\mathcal A}}

\newtheorem{Definition}{\bf Definition}[section]
\newtheorem{Thm}[Definition]{\bf Theorem}
\newtheorem{Example}[Definition]{\bf Example}
\newtheorem{Lem}[Definition]{\bf Lemma}
\newtheorem{Note}[Definition]{\bf Note}
\newtheorem{Cor}[Definition]{\bf Corollary}
\newtheorem{Prop}[Definition]{\bf Proposition}
\newtheorem{Problem}[Definition]{\bf Problem}
\numberwithin{equation}{section}

\section{Introduction} \label{intro}
\setcounter{equation}{0}

Consider  two sets of $n$ symbols $[n] = \{ 1, \, 2, \ldots, n \}$ and  $[\overline{n}] = \{ \overline{1},
\, \overline{2}, \ldots, \overline{n} \}$, with $i \neq \overline{j}$, for any $i, \, j$. Define
$X_{n} = [n] \cup [ \overline{n}]$.  The symbol $\overline{j}$ is
called the \texttt{colored version} of the symbol $j$.  Naturally there are $(2n)!$ permutations of
$X_{n}$. Some of these  permutations respect the sign, that is, 
satisfy $\sigma(\overline{j}) = \overline{\sigma(j)}$. These are called \texttt{signed permutations} or
\texttt{permutations of type} $B$. General information about them and their relations to 
Coxeter groups appears in Section $8.1$ of \cite{bjorner-2005a}.

The number of signed permutations on $ [n]$ is $2^{n}n!$, since each one of them is formed by a 
permutation of $[n]$ and a choice of sign. An example is 
\begin{equation}
\label{pi-def}
\pi = \begin{pmatrix} 1 & 2 & 3 & 4 & \overline{1} & \overline{2} & \overline{3} & \overline{4} \\
\overline{2} & 1 & 3 & \overline{4} & 2 & \overline{1} & \overline{3} & 4
\end{pmatrix}.
\end{equation}
\noindent
Observe that the complete permutation is determined by the values of
$\pi(1), \, \pi(2), \, \pi(3), \, \pi(4)$ and these must be a permutation of $[4]$ with some choices of
overlines. The remaining images are determined from the sign rule.  In order to simplify notation,
only the first half of the bottom row in \eqref{pi-def} is retained and $\pi$ is now written simply (in the
so-called \texttt{line notation}) as
\begin{equation}
\pi = \overline{2} \, 1 \, 3 \, \overline{4}.
\end{equation}

\noindent
As in the classical case, it is possible to express a signed permutation as a product of disjoint
cycles \cite{brenti-1994b}. The notation for the cycles is explained with the example.
 $\pi = \overline{4}6\overline{3}51\overline{2} \, \overline{9}87$. In order to compute the cycle of $\pi$
containing $1$, start by ignoring the coloring to produce the cycle $1 \rightarrow 4 \rightarrow 5$. Now 
insert the color back as they appear in the one-line notation for $\pi$. This produces the cycle
$(1 \overline{4}5)$. Continuing this process gives the final expression for $\pi$ as
 $(1 \overline{4}5)(\overline{2}6)(\overline{3})(7 \overline{9})(8)$. A second example  illustrates a
 point that \texttt{could lead to confusion}. The permutations considered here have
 no fixed points, for instance 
 \begin{equation}
 \pi = \begin{pmatrix}
 1 & 2 & 3 & \overline{1} & \overline{2} & \overline{3} \\
 2 & 1 & \overline{3} & \overline{2} & \overline{1} & 3
 \end{pmatrix}.
 \end{equation}
 \noindent
 The short hand notation for $\pi$ is $1 \, 2 \, \overline{3}$ and its cycle decomposition, as explained
 above,  is written as $(12)(\overline{3})$. The interpretation of the last term, $(\overline{3})$, is not that
 $\overline{3}$ is a fixed point of $\pi$, but that $\pi(3) = \overline{3}$ and (necessarily)
 $\pi(\overline{3}) = 3$.

 The goal of the present work is to study a variety of functions for signed permutations
  in terms of their cycle structure.

Recall that ${ n \brack k }$, the (unsigned) Stirling number of the first kind, counts the number of permutations
of $n$ elements with $k$ disjoint cycles. The recurrence for ${ n \brack k} $ is 
\begin{equation}
\begin{bmatrix} n \\ k \end{bmatrix}  =
\begin{bmatrix} n-1 \\ k -1 \end{bmatrix} +
(n-1)  \begin{bmatrix} n-1 \\ k \end{bmatrix}, \quad \text{ for } n>k> 1,
\end{equation}
\noindent
with the initial/boundary conditions $\begin{bmatrix} n\\ n \end{bmatrix}  = 1$ and
$ \begin{bmatrix} n \\ 1 \end{bmatrix} = (n-1)!$ for $n>0$ and naturally
$\begin{bmatrix} n \\ k  \end{bmatrix} = 0$ for $n<k$.  Other 
functions include some restrictions on the
length of the cycles. For example, if the cycles
 are restricted to have length at least $2$, one obtains the \textit{derangement numbers} $d_{n}$, given by 
\begin{equation}
\label{der-form1}
d_{n} = n! \sum_{j=0}^{n} \frac{(-1)^{j}}{j!}.
\end{equation}
\noindent
They satisfy the  recurrence $d_{n} = nd_{n-1} + (-1)^{n}$, with $d_{0}=1$. It follows from 
\eqref{der-form1} that 
\begin{equation}
\label{limit-der}
\lim\limits_{n \rightarrow \infty} \frac{d_{n}}{n!} = \frac{1}{e}.
\end{equation}
\noindent
Broder \cite{broder-1984a}
introduced the notion of $r$-\textit{permutations}. For $r \in \mathbb{N}$ an $r$-permutation of
$n+r$ is a permutation where the first $r$ elements, called \textit{special},  are in distinct
 cycles. The number of
$r$-permutations of $[n+r]$ into $k+r$ cycles are counted by the $r$-\textit{Stirling numbers of the
first kind}, denoted by $\begin{bmatrix} n \\ k \end{bmatrix}_{r}$.  An $r$-\textit{derangement} on
$[n+r]$ is an $r$-permutation without fixed points.  Information about these concepts appears in
\cite{mezo-2019b,wang-2017a}. These concepts are now extended to signed permutations.

\begin{definition}
\label{def1}
A \textit{derangement of type} $B$ on $[n]$ is a signed permutation $\sigma$ such that
$\sigma(i) \neq i$ for every $i \in [n]$. The set of all such permutations is denoted by
$\mathcal{D}_{n}^{B}$ and its cardinality  by $d_{n}^{B}$.
\end{definition}

Chow \cite{chow-2006a} (see also Assaf \cite{assaf-2010a}) proved that
\begin{equation}
\label{chow-1}
d_{n}^{B} = n! \sum_{k=0}^{n} \frac{(-1)^{k} 2^{n-k}}{k!},
\end{equation}
\noindent
and the analog of \eqref{limit-der} is
\begin{equation}
\label{limit-derB}
\lim\limits_{n \rightarrow \infty} \frac{d_{n}^{B}}{n! \, 2^{n} } = \frac{1}{\sqrt{e}}.
\end{equation}
\noindent
This sequence appears as $A000354$ in OEIS and its first few values (starting at $n=0$) are
\begin{equation}
1, \, 1, \, 5, \, 29, \, 233, \, 2329, \, 27949, \, 391285, \, \cdots.
\end{equation}
\noindent
Formula  \eqref{chow-1} is equivalent to 
\begin{equation}
\sum_{n=0}^{\infty} d_{n}^{B} \frac{x^{n}}{n!} = \frac{e^{-x}}{1-2x}.
\end{equation}

The main object of the present work is introduced next.

\begin{definition}
Let $n, \, r \in \mathbb{N}$. A \textit{type} B $\,\, r$-\textit{derangement} on the set $[n+r]$ is a
signed permutation on $[n+r]$, without fixed points and with $r$ elements (called \textit{special})
restricted to be in distinct cycles.  The set of all $r$-derangements of type $B$ on $[n+r]$ is denoted
by $\mathcal{D}_{n,r}^{B}$. Its  cardinality is denoted by $d_{n,r}^{B}$.  The case $r=0$  recovers
$d_{n}^{B}$ in Definition \ref{def1}.
\end{definition}

The number of elements of
$\mathcal{D}_{n,r}^{B}$ with $k+r$ cycles is called the $r$-\textit{Stirling number of type} $B$  and
is denoted by
$\begin{displaystyle} \begin{bmatrix} n \\ k \end{bmatrix}_{\geq 2, r}^{B} \end{displaystyle}$.
Counting over all possible cycles gives the relation
\begin{equation}
d_{n,r}^{B} = \sum_{k=0}^{n} \begin{bmatrix} n \\ k \end{bmatrix}_{\geq 2, r}^{B}.
\end{equation}

\begin{example}
The permutation 
$\sigma = (1 \overline{7} 4)(\overline{2})( 3 \overline{6} 5)$ is a type B$\, r$-derangement for
$0 \leq r \leq 3$ on the
set $[7]$.
\end{example}

\begin{Note}
\label{note-initcond}
The case $r=0$ has been discussed in \cite{mezo-2019a}.  The recurrence
\begin{equation}
\begin{bmatrix} n +1 \\ k \end{bmatrix}_{\geq 2, 0}^{B} = 2n
\begin{bmatrix} n  \\ k \end{bmatrix}_{\geq 2, 0}^{B}  +
2n \begin{bmatrix} n -1 \\ k -1 \end{bmatrix}_{\geq 2, 0}^{B}  +
\begin{bmatrix} n  \\ k - 1  \end{bmatrix}_{\geq 2, 0}^{B},
\end{equation}
\noindent
with the initial conditions
\begin{equation}
\begin{bmatrix} n \\ 0 \end{bmatrix}_{\geq 2, 0}^{B} = \delta_{n,0} \,\, \text{for} \,\, n>0
\,\, \text{ and } \begin{bmatrix} n \\ k \end{bmatrix}_{\geq 2, 0}^{B} = 0 \text{ for } n, \, k < 0,
\end{equation}
\noindent
is established there.
\end{Note}

\section{A recurrence for the $r$-Stirling numbers of type $B$}
\label{sec-recurrence}

This section  presents a recurrence for the $r$-Stirling numbers of type $B$.  The
initial condition involve the \textit{Lah numbers} $L(n,k)$,  defined as the number of ways a
set of $n$ elements can be partitioned into $k$ nonempty linearly ordered subsets \cite{riordan-2002a}.

\begin{theorem}
\label{rec-short}
For $n \geq 0$ and $k, \, r \geq 1$ the recursion
\begin{eqnarray*}
\begin{bmatrix} n+1 \\ k \end{bmatrix}_{\geq 2, r}^{B}  & = &
\begin{bmatrix} n \\ k-1 \end{bmatrix}_{\geq 2, r}^{B}
+
2n! \sum_{j=1}^{n} \frac{2^{j}}{(n-j)!}
\begin{bmatrix} n-j \\ k-1 \end{bmatrix}_{\geq 2, r}^{B}   \\
& & \quad \quad \quad \quad \quad \quad \quad +
4r n! \sum_{j=0}^{n} \frac{(j+1) 2^{j}}{(n-j)!}
\begin{bmatrix} n -j \\ k \end{bmatrix}_{\geq 2, r-1}^{B}    \\
& = &
\begin{bmatrix} n \\ k-1 \end{bmatrix}_{\geq 2, r}^{B}
+ 4r
\begin{bmatrix} n \\ k  \end{bmatrix}_{\geq 2, r-1}^{B}  \\
& & \quad \quad \quad \quad + 4n! \sum_{j=1}^{n} \frac{2^{j-1}}{(n-j)!}
\left(
\begin{bmatrix} n-j \\ k-1   \end{bmatrix}_{\geq 2, r}^{B}
+ 2r(j+1)
\begin{bmatrix} n-j \\ k   \end{bmatrix}_{\geq 2, r-1}^{B}
\right),
\nonumber
\end{eqnarray*}
\noindent
holds. The initial conditions are given by
\begin{equation*}
\begin{bmatrix} n \\ 0 \end{bmatrix}_{\geq 2, r}^{B}   = 2^{n} n! \sum_{j=0}^{r}
\binom{r}{j} \binom{n-1}{r-j-1} 2^{r-j} \quad \text{ and } \quad
\begin{bmatrix} 0 \\ 0 \end{bmatrix}_{\geq 2, r}^{B} = 1.
\end{equation*}
\end{theorem}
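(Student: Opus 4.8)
The plan is to prove the recurrence combinatorially by conditioning on the cycle that contains a distinguished (say, the largest) ordinary element, and to establish the initial condition by a separate counting argument. Throughout, the basic enumerative input is the number of admissible signed cycles on a fixed set of $m$ labelled elements. Writing such a cycle through the first half of its orbit on $X_m$ shows that there are exactly $(m-1)!\,2^{m}$ signed cycles when $m\ge 2$, both the positive and the negative type occurring and each contributing $(m-1)!\,2^{m-1}$; for $m=1$ only the negative one-cycle $(\overline{x})$ is admissible, since a positive one-cycle would be a fixed point, so there is a single admissible cycle. I would record this count first, since every term below is an instance of it.

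To obtain the recurrence, fix an $r$-derangement of type $B$ on $[(n+1)+r]$ with $k+r$ cycles and examine the cycle $C$ containing the new ordinary element $x=n+1$. Because the $r$ special elements lie in distinct cycles, $C$ contains at most one special element, and this trichotomy produces the three summands. If $C=(\overline{x})$ is a lonely negative one-cycle, deleting it leaves an $r$-derangement on the remaining $n$ ordinary elements with $(k-1)+r$ cycles, giving $\begin{bmatrix} n \\ k-1 \end{bmatrix}_{\geq 2,r}^{B}$. If $C$ consists of $x$ and $j\ge 1$ further ordinary elements but no special element, there are $\binom{n}{j}$ choices of companions and $j!\,2^{\,j+1}$ admissible signed cycles on the resulting $j+1$ elements; deleting $C$ leaves $\begin{bmatrix} n-j \\ k-1 \end{bmatrix}_{\geq 2,r}^{B}$, and summing over $j$ yields the second term. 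Finally, if $C$ contains exactly one special element $s$ (chosen in $r$ ways) together with $x$ and $j\ge 0$ ordinary elements, the cycle has $j+2$ elements and admits $(j+1)!\,2^{\,j+2}$ signed structures; deleting $C$ now removes a special element, so the remainder is an $(r-1)$-derangement with $k+(r-1)$ cycles, counted by $\begin{bmatrix} n-j \\ k \end{bmatrix}_{\geq 2,r-1}^{B}$. Collecting the factors via $\binom{n}{j}(j+1)!=(j+1)\,n!/(n-j)!$ produces the third term. The three cases are exhaustive and disjoint, which gives the first displayed identity; the second follows from it by splitting off the $j=0$ summand of the special-element term (it equals $4r\begin{bmatrix} n\\ k\end{bmatrix}_{\geq2,r-1}^{B}$) and rewriting the remaining $j\ge1$ contributions over the common factor $2^{\,j-1}/(n-j)!$, which is routine algebra.

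For the initial condition $k=0$ the total number of cycles is $r$, so each of the $r$ cycles carries exactly one special element and the $n$ ordinary elements are partitioned among them. I would organize the count by the number $j$ of special elements sitting in lonely negative one-cycles: there are $\binom{r}{j}$ ways to choose them, after which the $n$ ordinary elements are distributed into the remaining $r-j$ cycles, each nonempty. Partitioning $n$ labelled elements into $r-j$ nonempty ordered blocks and turning each block of size $\ell$, together with its special element, into an admissible signed cycle contributes $\ell!\,2^{\,\ell+1}$; the block factorials cancel against the multinomial coefficient to leave $n!$, the product of the sign factors is $2^{\,n+(r-j)}$, and the number of compositions of $n$ into $r-j$ positive parts is $\binom{n-1}{r-j-1}$. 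Summing over $j$ gives exactly $2^{n}n!\sum_{j=0}^{r}\binom{r}{j}\binom{n-1}{r-j-1}2^{\,r-j}$, and the degenerate case $n=0$ is handled directly to give $\begin{bmatrix} 0\\0\end{bmatrix}_{\geq2,r}^{B}=1$. As a check, this count equals $n!\,[x^{n}]\bigl(\tfrac{1+2x}{1-2x}\bigr)^{r}$, the $r$-th power reflecting that the $r$ special cycles are built independently.

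The main obstacle is the signed-cycle enumeration together with the sign bookkeeping: one must argue that for $m\ge2$ both orientation types contribute while for $m=1$ only the negative cycle survives, and then keep the powers of $2$ consistent across all three cases and in the initial condition. Once the count $(m-1)!\,2^{m}$ is in hand, the rest reduces to choosing companions, deleting $C$, and matching indices; the only delicate point is the shift from $r$ to $r-1$ together with the preservation of the index $k$ in the special-element case, which records that removing a cycle carrying a special element lowers both the special count and the total cycle count by one.
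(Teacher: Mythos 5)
Your proposal is correct and follows essentially the same route as the paper: the recurrence is obtained by classifying the cycle containing the new ordinary element (singleton barred cycle, non-special cycle, or special cycle), and the initial condition by distributing the $n$ ordinary elements among the $r$ special cycles after setting aside those special elements that sit in barred singletons. The only differences are cosmetic --- you merge the paper's separate bookkeeping of whether $n+1$ is colored into a single factor $2^{j+1}$ (resp.\ $2^{j+2}$), and you count the distribution into special cycles via compositions $\binom{n-1}{r-j-1}$ where the paper uses Lah numbers $L(n,r-j)$, which amounts to the same identity.
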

\begin{proof}
The initial condition is established first. Recall  that 
$\begin{displaystyle} \begin{bmatrix} n \\ 0   \end{bmatrix}_{\geq 2, r}^{B} \end{displaystyle}$
counts the number of signed permutations of $[n+r]$, with $r$ cycles, without fixed points and
$r$ special elements restricted to be in distinct cycles.  Therefore each cycle contains a single
special point and,  since there are no fixed points, the special points appearing in a cycle of
length $1$ must be colored.
 The next step is to place the $n$ non-special points in the $r$ cycles. Let $j$ be the
number of cycles that receive no new points ($0 \leq j \leq r$).  Then
\begin{equation}
 \begin{bmatrix} n \\ 0   \end{bmatrix}_{\geq 2, r}^{B}  =
 \sum_{j=0}^{r} \binom{r}{j} \cdot 2^{n+r-j} (r-j)! \, L(n,r-j).
 \end{equation}
 \noindent
 The binomial coefficient $\binom{r}{j}$ takes into account the choices of cycles without new points, the
 factor $(r-j)!$ takes care of the ordering of the cycles with new points, the Lah number counts the
 ways to partition the $n$ non-special points in order to place them in the $r-j$ cycles and finally the
 power of $2$ describes whether to color the remaining elements or not. The form of the initial
 condition stated above is obtained by using the expression
 \begin{equation}
 L(n,k) = \binom{n}{k} \binom{n-1}{k-1} (n-k)! = \frac{n!}{k!} \binom{n-1}{k-1},
 \end{equation}
 \noindent
 appearing in \cite{riordan-2002a}.

 \smallskip

 To prove the general case of the recurrence, recall that there are  $r+k$
 cycles of size at least $2,  \,\,  r$ containing the special points and $k$ containing no special points.
 Now consider cases  for the image of  non-special element $n+1$ under $\pi$.  \\

 \noindent
 \texttt{Option 1}. The element $\overline{n+1}$ is fixed by  $\pi$. Recall the
 warning: this means that $\pi(n+1) = \overline{n+1}$ and $\pi(\overline{n+1}) = n+1$.  Removing the
 cycle  $\{ n+1, \, \overline{n+1}\}$ leaves 
 $ \begin{bmatrix} n \\ k-1   \end{bmatrix}_{\geq 2, r}^{B}$ choices. This gives the first term in the
 recurrence. \\

 \noindent
 \texttt{Option 2}. Assume $n+1$ is the image under a signed permutation of an uncolored symbols.
 There are two cases to consider:
 
 \noindent
  \textit{Case 1}: $n+1$ belongs to one the first $r$ cycles. Then the 
  cycle  $\mathfrak{b}$ containing $n+1$
 has, in addition to $n+1$,  one special element and $j$ additional  ones $(0 \leq j \leq n$)
 from $\{ 1, \, 2, \, \ldots, n \}$, for  a total of $j+2$ elements.  Since $n+1$ is not colored, there are
 $2^{j+1}$ ways to color or not the other elements in $\mathfrak{b}$. The $j+2$ elements of the
 cycle $\mathfrak{b}$ can now be permuted cyclically in $(j+1)!$ ways. The count is now
 \begin{equation}
 r \sum_{j=0}^{n} \binom{n}{j} (j+1)! 2^{j+1} \begin{bmatrix} n-j \\ k   \end{bmatrix}_{\geq 2, r-1}^{B}.
 \end{equation}
 \noindent
 In this count, the number of special cycles has been reduced by $1$, since $n+1$
 occupies one of them. 
 
 \noindent
 \textit{Case 2}. If $n+1$ does not belong to one of the special cycles, then the 
 number in Case 1  becomes
 \begin{equation}
  \sum_{j=1}^{n} \binom{n}{j} j! 2^{j} \begin{bmatrix} n-j \\ k-1   \end{bmatrix}_{\geq 2, r}^{B}.
 \end{equation}
 \noindent
 The index $j$ counts the number of additional elements in the cycle containing $n+1$ and the
 remaining terms are interpreted as before.  \\

 \noindent
 \texttt{Option 3}. Assume $\overline{n+1}$ belongs to one of the  cycles. Then the 
  count is similar as in Option 2. 
 The numbers of cases where  $\overline{n+1}$ is in one of $r$ special cycles is
 \begin{equation}
 2r \sum_{j=0}^{n} \binom{n}{j} (j+1)! 2^{j} \begin{bmatrix} n-j \\ k   \end{bmatrix}_{\geq 2, r-1}^{B}.
 \end{equation}
 \noindent
 Otherwise, $\overline{n+1}$ is in a cycle without  special points and this contributes 
 \begin{equation}
  \sum_{j=0}^{n} \binom{n}{j} j! 2^{j} \begin{bmatrix} n-j \\ k -1  \end{bmatrix}_{\geq 2, r}^{B}
 \end{equation}
 \noindent
to the count. This completes the proof.
\end{proof}

\begin{Note}
The recurrence is now  used to generate the values of
$\begin{bmatrix} n \\ k   \end{bmatrix}_{\geq 2, r}^{B}.$ The matrix below gives the values
for $r=3$ in the range $0 \leq n, \, k \leq 6$: \\

\begin{equation}
\label{matrix-1}
\left[ \begin{bmatrix} n \\ k   \end{bmatrix}_{\geq 2, 3}^{B} \right] =
\left(
\begin{array}{ccccccc}
 1 & 0 & 0 & 0 & 0 & 0 & 0 \\
 12 & 1 & 0 & 0 & 0 & 0 & 0 \\
 144 & 28 & 1 & 0 & 0 & 0 & 0 \\
 1824 & 592 & 48 & 1 & 0 & 0 & 0 \\
 25344 & 11232 & 1552 & 72 & 1 & 0 & 0 \\
 391680 & 213888 & 41824 & 3280 & 100 & 1 & 0 \\
 6727680 & 4267008 & 1061248 & 119520 & 6080 & 132 & 1 \\
\end{array}
\right).
\end{equation}
\end{Note}

\begin{Note}
The special value $d_{r,0} = 1$ comes directly from the definition. The recurrence in Theorem
\ref{rec-short}  is now used to check that $d_{1,r}^{B} = 1+4r$. Indeed,
\begin{equation}
d_{r,1}^{B} = \begin{bmatrix} 1 \\ 0   \end{bmatrix}_{\geq 2, r}^{B}  +
\begin{bmatrix} 1 \\ 1   \end{bmatrix}_{\geq 2, r}^{B},
\end{equation}
\noindent
and the initial condition in Theorem \ref{rec-short} shows that 
\begin{equation}
\begin{bmatrix} 1 \\ 0   \end{bmatrix}_{\geq 2, r}^{B} = 2^{1} \cdot 1! \times
\sum_{j=0}^{r-1} \binom{r}{j} \binom{0}{r-j-1} 2^{r-j} = 4r,
\end{equation}
\noindent
and the second term  in the recurrence yields
\begin{equation}
\begin{bmatrix} 1 \\ 1   \end{bmatrix}_{\geq 2, r}^{B} =
\begin{bmatrix} 0 \\ 0   \end{bmatrix}_{\geq 2, r}^{B}  + 4r
\begin{bmatrix} 0 \\ 1   \end{bmatrix}_{\geq 2, r}^{B} + \text{an empty sum} = 1,
\end{equation}
\noindent
for a total of  $d_{1,r}^{B} = 1+4r$.
\end{Note}

\section{An approach from  the theory of Riordan arrays }
\label{sec-arrays}

This section considers  the matrix $\mathcal{C}_{\ge 2, r}:=\left({n \brack k}_{\ge 2, r}^B\right)_{n, k \geq 0}$
  as a Riordan array.  Enumerative arguments for some  combinatorial identities are presented. The
   unsigned entries of the matrix $\C^{-1}_{\ge 2, r}$ are discussed first.

Recall that an infinite lower triangular matrix  $L=\left[l_{n,k}\right]_{n,k\geq 0}$ is called an
\emph{exponential  Riordan array}  \cite{barry-2007a} if its $k^{th}$-column has generating function of 
the form 
 $g(z)\left(f(z)\right)^k/k!, k = 0, 1, 2, \ldots g(z)$, where  $g(z)$ and $f(z)$ are formal power series with
  $g(0) \neq 0, \, f(0)=0$ and $f'(0)\neq 0$.  The matrix corresponding to the pair $f(z), g(z)$ is denoted 
  by  $(g(z),f(z))$. 
  
   Multiplying  $(g, f)$ by a column vector $(c_0, c_1, \dots)^t$ with exponential generating function $h(z)$,  
   results in a  column vector with exponential generating function $g\cdot h\circ f$. This property is known 
   as the fundamental theorem of exponential Riordan arrays or \textit{summation property} 
   \cite{deutsch-2009a}. The product 
   of two exponential Riordan arrays $(g(z),f(z))$ and $(h(z),\ell(z))$ is defined by:
 $$(g(z),f(z))*(h(z),\ell(z))=\left(g(z)h\left(f(z)\right), \ell\left(f(z)\right)\right).$$
The set of all exponential Riordan matrices is a group  under this operation
\cite{barry-2007a}, \cite{shapiro-1991a}. The identity is $I=(1,z)$ and
 \begin{equation}\label{invRiordan}
 (g(z), f(z))^{-1}=\left(\frac{1}{\left(g\circ \overline{f}\right)(z)}, \overline{f}(z)\right),
\end{equation}
here $\overline{f}(z)$ denotes  the compositional inverse of $f(z)$; that is, $(\overline{f} \circ f)(z) = z$.

Deutsch et al. \cite{deutsch-2009a} gave an algorithm to calculate the entry $l_{n,k}$ of an exponential Riordan array. They proved
that every element $l_{n+1,k}$ of an exponential Riordan array  can be expressed as a linear 
combination of the elements in the preceding row. In particular, for 
 $L=\left[l_{n,k}\right]_{n,k\geq 0}=(g(z),f(z))$ there are sequences $(a_n)$ and  $(z_n)$ such that
\begin{align}
l_{n+1,0} &= \sum_{i\geq 0} i! a_i l_{n,i}, \label{recriordan1}\\
l_{n+1,k}&=a_0l_{n,k-1} + \frac{1}{k!}\sum_{i\geq k}i!(z_{i-k}+ka_{i-k+1})l_{n,i}, \label{recriordan2}
\end{align}
where
\begin{equation*}
    A(t)=a_0+a_1t+a_2t^2+\cdots ,\quad Z(t)=z_0+z_1t+z_2t^2+\cdots
\end{equation*}
satisfy the functional relations
\begin{align} \label{production1}
A(t)=f'(\bar{f}(t)), \quad Z(t)=\frac{g'(\bar{f}(t))}{g(\bar{f}(t))}.
\end{align}
Conversely, \eqref{production1} implies \eqref{recriordan1} and \eqref{recriordan2}. 

The next results states that the matrix defined by  
$\mathcal{C}_{\ge 2, r}:=\left({n \brack k}_{\ge 2, r}^B\right)_{n, k \geq 0}$ is an exponential Riordan array.

\begin{theorem}
\label{teoriob}
The matrix $\mathcal{C}_{\ge 2, r}$ is the exponential Riordan array given by
$$\left(\left(\frac{1+2z}{1-2z}\right)^r, -\ln(1-2z)-z\right).$$
\end{theorem}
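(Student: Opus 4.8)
The plan is to compute directly the exponential generating function of each column of $\mathcal{C}_{\ge 2, r}$ and to recognize it in the form $g(z)\,f(z)^{k}/k!$ prescribed by the definition of an exponential Riordan array. The starting observation is structural: a type B $r$-derangement with $k+r$ cycles consists of $r$ \emph{special cycles}, each containing exactly one of the $r$ distinguished elements, together with $k$ \emph{non-special cycles}, and every cycle must avoid fixed points. The $n$ non-special elements are the labelled objects, so I would track them with the variable $z$ and assemble the generating function from the two cycle types.

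First I would determine the EGF of a single admissible non-special cycle. The exponential formula applied to $\sum_{m \ge 0} 2^{m} m!\, z^{m}/m! = 1/(1-2z)$ shows that a single signed cycle on $\ell$ labelled points has EGF $-\log(1-2z) = \sum_{\ell \ge 1} 2^{\ell}(\ell-1)!\, z^{\ell}/\ell!$, so there are $2^{\ell}(\ell-1)!$ signed cycles on $\ell$ points. Among these the only structure producing a fixed point is the positive singleton $(i)$ (meaning $\sigma(i)=i$); the negative singleton $(\overline{i})$ is admissible, since it encodes $\sigma(i)=\overline{i}$. Removing the single forbidden structure subtracts the term $z$, so one admissible cycle has EGF $f(z) = -\log(1-2z) - z$. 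By the exponential formula, $k$ unordered admissible non-special cycles then contribute $f(z)^{k}/k!$, the non-special factor of the $k$-th column.

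Next I would compute the factor produced by the $r$ special cycles, which yields $g(z)$. A special cycle is an admissible signed cycle built on one fixed special element together with $j \ge 0$ non-special elements. For $j \ge 1$ every one of the $2^{j+1} j!$ signed cycles on the $j+1$ points is admissible (no fixed point is possible once the underlying length is at least $2$), while for $j=0$ only the negative singleton $(\overline{s})$ survives, giving a single structure. Hence the EGF of one special cycle, tracked in its non-special elements, is
\[
\phi(z) = 1 + \sum_{j \ge 1} 2^{j+1} j!\,\frac{z^{j}}{j!} = 1 + \frac{4z}{1-2z} = \frac{1+2z}{1-2z}.
\]
Because the $r$ special elements are distinguishable, the $r$ special cycles form a labelled product, so their joint EGF is $\phi(z)^{r} = \left((1+2z)/(1-2z)\right)^{r} = g(z)$.

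Finally I would assemble the pieces: distributing the $n$ non-special elements between the $r$ special cycles and the $k$ non-special cycles is again a labelled product, so the column indexed by $k$, namely $\left[{n \brack k}_{\ge 2, r}^{B}\right]_{n \ge 0}$, has EGF $g(z)\,f(z)^{k}/k!$. This is exactly the assertion that $\mathcal{C}_{\ge 2, r} = (g(z),f(z))$ with the claimed pair. I expect the main obstacle to be the careful bookkeeping of the signed cycle structure rather than any hard analysis: one must correctly identify which singletons are admissible, so that the $j=0$ term contributes $1$ and not $2$, and the non-special single-cycle EGF is $-\log(1-2z)-z$ rather than $-\log(1-2z)-2z$, these being precisely the places where the type B derangement condition enters. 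As a consistency check one may verify the summation property $g(z)e^{f(z)} = \left((1+2z)/(1-2z)\right)^{r} e^{-z}/(1-2z)$ against $\sum_{n} d_{n,r}^{B} z^{n}/n!$, recovering the known $r=0$ generating function $e^{-z}/(1-2z)$. An alternative route through the production matrix \eqref{recriordan1}--\eqref{recriordan2} is available but less convenient, as it requires the compositional inverse $\overline{f}$ of $-\log(1-2z)-z$, which has no closed form.
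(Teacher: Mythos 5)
Your proof is correct, and its overall architecture coincides with the paper's: both identify the $k$-th column EGF as $g(z)f(z)^k/k!$ by splitting a type B $r$-derangement into its $r$ special cycles and its $k$ unordered non-special cycles, with the admissible non-special cycle contributing $-\log(1-2z)-z$ exactly as in the paper's derivation of $C_1$ (there written as $\sum_{n\ge 2}(2z)^n/n + z$, which is the same series). The one place you genuinely diverge is the computation of $g(z)=C_0(z)$: the paper takes the explicit initial-condition formula ${n\brack 0}_{\ge 2,r}^B = 2^n n!\sum_j\binom{r}{j}\binom{n-1}{r-j-1}2^{r-j}$ from Theorem \ref{rec-short} (derived via Lah numbers) and sums the resulting binomial series to get $\left(\frac{1+2z}{1-2z}\right)^r$, whereas you compute the single-special-cycle EGF $\phi(z)=1+\sum_{j\ge 1}2^{j+1}z^j=\frac{1+2z}{1-2z}$ directly and raise it to the $r$-th power by the labelled product rule. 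Your route is more self-contained (it does not lean on the initial condition of Theorem \ref{rec-short}) and makes the factorization $g=\phi^r$ combinatorially transparent; the paper's route has the side benefit of cross-validating the Lah-number formula for the zeroth column. Your bookkeeping of the singleton cases (only the barred singleton is admissible, hence the correction $-z$ rather than $-2z$, and the $j=0$ term of $\phi$ equal to $1$ rather than $2$) is exactly right and is indeed where the fixed-point-free condition enters.
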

\begin{proof}
Let $C_k(x):=\sum_{n\geq 0} {n+k \brack k}_{\ge2, r}^B \frac{x^n}{n!}$ be the generating function of the 
columns of the matrix $\mathcal{C}_{\ge 2, r}$. The initial values in Theorem  \ref{rec-short} shows 
that the generating function of the first column is given by
\begin{align*}
C_0(x)&=\sum_{n\geq 0} 2^nn! \sum_{j=0}^r\binom rj \binom{n-1}{r-j-1}2^{r-j}\frac{x^n}{n!}=\sum_{j=0}^r \binom rj 2^{r-j} \sum_{n\geq 0} \binom{n-1}{r-j-1}(2x)^n.
\end{align*}
The identity
$$\frac{1}{(1-x)^t}=\sum_{n\geq 0} \binom{t+n-1}{t-1}x^n,$$
gives
$$\left(\frac{2x}{1-2x}\right)^{r-j}=\sum_{n\geq 0} \binom{n-1}{r-j-1}x^n,$$
Therefore
\begin{align*}
C_0(x)=\sum_{j=0}^r \binom rj 2^{r-j} \left(\frac{2x}{1-2x}\right)^{r-j}=\left(1 + \frac{4x}{1-2x}\right)^r=\left(\frac{1+2x}{1-2x}\right)^r.
\end{align*}
The generating function for the column $C_1(x)$ is presented next.  The relation 
\begin{align}\label{eccol1}
 {n \brack 1}_{\ge2, r}^B=\sum_{k=2}^n\binom{n}{k}(k-1)!2^k {n-k \brack 0}_{\ge2, r}^B + n{n-1 \brack 0}_{\ge2, r}^B.
 \end{align}
follows from the following combinatorial argument. Assume a  non-special cycle has $k$ elements
 ($2\leq k \leq n$). These can be chosen and colored  in $\binom nk (k-1)!$ ways. The coloring of 
 these these elements produce $2^{k}$ options. The remaining elements can be organized in   ${n-k \brack 0}_{\ge2, r}^B$ ways. Summing over $k$ gives  the first expression in \eqref{eccol1}. On the other hand, in 
 the case where a non-special block has only one element gives 
  $ n{n-1 \brack 0}_{\ge2, r}^B$ options.  Therefore 
$$C_1(x)=C_0(x) \sum_{n\geq 2} \frac{(2x)^n}{n} + xC_0(x)=C_0(x)(-x-\log(1-2x)).$$
Similarly
\begin{align}\label{columns}
C_k(x)=C_0(x)\frac{(-x-\log(1-2x))^k}{k!},
\end{align}
since $C_0(x)$ takes into account the $r$ special cycles and  $(-x-\log(1-2x))^k/k!$ are sequences of 
unordered  $k$ cycles. The result now follows 
from \eqref{columns} and the definition of a Riordan array.
\end{proof}

\section{Counting distances on graphs}
\label{sec-graphs}

Given a graph $G$ the sequence 
$S_{G}(n)$ is defined as the  number of vertices which have a distance $n$ from a given vertex in $G$. 
This sequence was discussed in \cite{baake-1997a} and  \cite{conway-1997b}. The example 
$S_{\mathbb{Z}^2}(3)=12$ is pictured in  Figure \ref{lattice}.
\begin{figure}[htbp]
\centering
\includegraphics[scale=0.8]{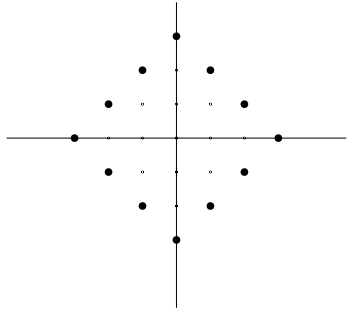}
\caption{Points at a distance 2 from a fixed point.} \label{lattice}
\end{figure}
\noindent
In the case of  the $r$-dimension  lattice $\mathbb{Z}^r$, it follows that
$$\sum_{n\geq 0} S_{\mathbb{Z}^r}(n) x^n=\left(\frac{1+x}{1-x}\right)^r.$$

The first column in the Riordan array given in  Theorem \ref{teoriob}  gives the next 
statement.  A combinatorial proof is presented. 

\begin{theorem}
For $r,n\in \mathbb{N}$
\begin{equation}
{n\brack 0}_{\geq 2,r}^B=2^nn!S_{\mathbb{Z}^r}(n).
\end{equation}
\end{theorem}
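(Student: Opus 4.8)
The plan is to give a direct bijective proof rather than invoke the generating functions of Theorem \ref{teoriob}. The left-hand side $\begin{bmatrix} n \\ 0 \end{bmatrix}_{\geq 2, r}^{B}$ counts type $B$ $r$-derangements of $[n+r]$ with exactly $r$ cycles; since there are $r$ special points forced into distinct cycles, each cycle carries exactly one special point. I would first recall that such a permutation is the same datum as an underlying (unsigned) permutation of $[n+r]$ with $r$ cycles, each containing one special point, together with a coloring $\epsilon\colon[n+r]\to\{\pm1\}$ of all symbols, where the no-fixed-point condition forces a singleton cycle (i.e.\ the ``warning'' case $\pi(i)=\overline i$) to have its point colored and imposes no restriction on longer cycles.

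The key observation is that a non-special symbol can never be a singleton cycle, because every cycle must contain a special point; hence the $n$ non-special symbols are always freely colorable, contributing a factor $2^n$, and the only constrained colors are those of the $r$ special points. I would then stratify by the profile $(b_1,\dots,b_r)$, where $b_i\ge 0$ is the number of non-special symbols in the cycle of the $i$-th special point, so $b_1+\cdots+b_r=n$. For a fixed profile, distributing and cyclically arranging the $n$ labelled non-special symbols into the $r$ cycles (each anchored at its special point) is the same as writing a word on all $n$ symbols and cutting it into consecutive blocks of the prescribed lengths; this yields exactly $n!$ arrangements, independently of the profile. The special point of cycle $i$ is freely colored when $b_i\ge 1$ and forced colored when $b_i=0$, contributing $2^{\#\{i\,:\,b_i\ge 1\}}$.

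Collecting these factors gives
\begin{equation*}
\begin{bmatrix} n \\ 0 \end{bmatrix}_{\geq 2, r}^{B} = 2^{n}\,n!\sum_{\substack{b_1,\dots,b_r\ge 0\\ b_1+\cdots+b_r=n}} 2^{\#\{i\,:\,b_i\ge 1\}},
\end{equation*}
so it remains to identify the sum with $S_{\mathbb{Z}^r}(n)$. This is immediate from the magnitude--sign description of a lattice point: a vector $(a_1,\dots,a_r)\in\mathbb{Z}^r$ at distance $n$, meaning $|a_1|+\cdots+|a_r|=n$, is specified by its magnitudes $b_i=|a_i|$ together with a sign for each nonzero coordinate, and the number of such sign choices for a fixed magnitude profile is precisely $2^{\#\{i\,:\,b_i\ge 1\}}$; summing over profiles recovers $S_{\mathbb{Z}^r}(n)$. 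Reading $\mathrm{sign}(a_i)$ as the color of the $i$-th special point whenever $b_i\ge 1$ then upgrades the count to an explicit bijection. I expect the main obstacle to be the bookkeeping around the signed-cycle structure, and in particular the asymmetry that a lone special point must be colored while a zero coordinate of the lattice point carries no sign; tracking this ``warning'' case correctly is exactly what makes the two sides match.
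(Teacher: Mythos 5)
Your proof is correct and is essentially the bijection the paper uses: cycle sizes of the $r$ special cycles become the coordinate magnitudes, the colors of the special points become the signs of the nonzero coordinates, and the factors $2^n$ and $n!$ arise from the free colorings of the non-special symbols and their arrangement into the cycles. Your stratification by the profile $(b_1,\dots,b_r)$ and your explicit handling of the forced coloring of a singleton special cycle (matching the signless zero coordinate) just spell out more carefully what the paper's map $\sigma\mapsto x_\sigma$ encodes implicitly.
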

\begin{proof}
A permutation $\sigma$ with no non-special elements is  written as
$\sigma =c_1c_2\cdots c_r,$ where $c_i$ are the special cycles containing one of the $[r]$ 
elements. Define $x_{\sigma}=(x_1,\dots ,x_r)\in \mathbb{Z}^r$  by
$$x_i=\begin{cases}
      0, & \text{ if } \sigma _i = \bar i; \\
      -(ord(c_i)-1), &\text{ if }   \bar i \in c_i;\\
      ord(c_i)-1, & \text{otherwise.}
   \end{cases}$$
   where $ord(c_i)$ is the length of the cycle $c_{i}$. It is clear that $\sum _{i=1}^r|x_i|=n$, so 
   $x_{\sigma}$ is at distance $n$ from the origin. This construction does not take into account the sign
    in the $[n]$ elements. This can be done in  $2^n$ ways. Finally, the $n!$ term count the ordering of 
    position in the cycles. The proposition follows.
\end{proof}

 Lengyel \cite{lengyel-2006a} presented the interesting expression:
$$\frac{1+x}{1-x}=\sum_{n=1}^{\infty} x^{\lfloor \frac{n}{\phi}\rfloor} + \sum_{n=1}^{\infty} x^{\lfloor n\phi^2\rfloor}, \quad \text{for} \, |x|<1.$$
Here $\phi$ is the golden ratio.

The next statement characterizes the two diagonals below the main one.

\begin{theorem}
\label{diagonals} 
For $n\geq 0$,  the identities
\begin{align*}
    { n + 1 \brack n }^B_{\geq 2,r}&=2(n+1)(n+2r),\\
    {n+2 \brack n}^B_{\geq 2, r}&=
    \frac{4}{3}\binom{n+2}{2}(3n^2+n+12nr+12r^2),
\end{align*}
\noindent
hold.
\end{theorem}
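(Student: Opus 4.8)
The plan is to read both identities directly off the exponential Riordan array structure furnished by Theorem \ref{teoriob}, rather than arguing combinatorially. Write $\mathcal{C}_{\geq 2, r} = (g(z), f(z))$ with $g(z) = \left(\frac{1+2z}{1-2z}\right)^r$ and $f(z) = -\ln(1-2z) - z$. By the definition of an exponential Riordan array, the $n$-th column has exponential generating function $g(z) f(z)^n / n!$, so for any fixed offset $m \geq 0$,
\begin{equation*}
\begin{bmatrix} n+m \\ n \end{bmatrix}^B_{\geq 2, r} = \frac{(n+m)!}{n!}\, [z^{n+m}]\, g(z) f(z)^n.
\end{equation*}
Since $f$ vanishes to first order at $0$, I would factor $f(z) = z\,\phi(z)$, whence $f(z)^n = z^n \phi(z)^n$ and the target entry becomes $\frac{(n+m)!}{n!}[z^m]\,g(z)\phi(z)^n$. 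The two identities are exactly the cases $m = 1$ and $m = 2$, so only the first three Taylor coefficients of $g$ and $\phi$ are ever needed.

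The next step is to record those coefficients. Expanding $-\ln(1-2z) = 2z + 2z^2 + \tfrac{8}{3}z^3 + \cdots$ gives $f(z) = z + 2z^2 + \tfrac{8}{3}z^3 + \cdots$, hence $\phi(z) = 1 + 2z + \tfrac{8}{3}z^2 + \cdots$, i.e.\ $\phi_1 = 2$ and $\phi_2 = \tfrac{8}{3}$. For $g$, I would use $\frac{1+2z}{1-2z} = 1 + 4z + 8z^2 + \cdots$ together with either the binomial series or $g = \exp\!\big(r\ln\frac{1+2z}{1-2z}\big)$, noting that $\ln\frac{1+2z}{1-2z} = 4z + \tfrac{16}{3}z^3 + \cdots$ has \emph{no} quadratic term, to obtain $g(z) = 1 + 4r z + 8r^2 z^2 + \cdots$, that is $g_1 = 4r$ and $g_2 = 8r^2$.

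For $m = 1$ the extraction is immediate: $[z^1]\,g(z)\phi(z)^n = g_1 + n\phi_1 = 4r + 2n$, and multiplying by $\frac{(n+1)!}{n!} = n+1$ yields $2(n+1)(n+2r)$, the first identity. For $m = 2$, I would expand $\phi(z)^n = 1 + n\phi_1 z + \big(n\phi_2 + \binom{n}{2}\phi_1^2\big)z^2 + \cdots$ and collect $[z^2]\,g(z)\phi(z)^n = g_2 + g_1 n\phi_1 + n\phi_2 + \binom{n}{2}\phi_1^2 = 8r^2 + 8rn + \tfrac{8n}{3} + 2n(n-1)$. Multiplying by $\frac{(n+2)!}{n!} = (n+1)(n+2)$ and using $\tfrac{8n}{3} - 2n = \tfrac{2n}{3}$ rewrites the bracket as $\tfrac{2}{3}(3n^2 + n + 12nr + 12r^2)$, which gives the second identity after writing $(n+1)(n+2) = 2\binom{n+2}{2}$.

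The main obstacle is entirely bookkeeping rather than conceptual: correctly expanding $\phi(z)^n$ to order $z^2$ (the $\binom{n}{2}\phi_1^2$ term is easy to overlook) and reading off $g_2 = 8r^2$, where the vanishing of the quadratic coefficient of $\log\frac{1+2z}{1-2z}$ is the only mildly delicate point. Everything else follows from the coefficient formula above, and as a numerical safeguard one can check both formulas at $r=3$ against \eqref{matrix-1}: the first subdiagonal there is $2(n+1)(n+6)$, matching $12,28,48,72,100,132$, and the second gives $\tfrac{4}{3}\binom{n+2}{2}(3n^2+37n+108)$, matching $144,592,1552,3280,6080$.
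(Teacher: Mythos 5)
Your proposal is correct, and every coefficient you extract checks out (including against the $r=3$ matrix in \eqref{matrix-1}), but it takes a genuinely different route from the paper. The paper argues combinatorially: for ${n+1 \brack n}^B_{\geq 2,r}$ there are $n+r$ cycles to be filled by $n+1$ non-special elements, so either one non-special element joins a special cycle (in $(n+1)\,r\,2^2$ ways) or two non-special elements form a $2$-cycle (in $2^2\binom{n+1}{2}$ ways), all remaining cycles being forced barred singletons; the second identity is only asserted to be ``similar.'' You instead read both entries off the exponential Riordan array of Theorem \ref{teoriob} via ${n+m \brack n}^B_{\geq 2,r} = \frac{(n+m)!}{n!}[z^m]\,g(z)\phi(z)^n$ with $f = z\phi$, which reduces everything to the first few Taylor coefficients $g_1 = 4r$, $g_2 = 8r^2$, $\phi_1 = 2$, $\phi_2 = \tfrac83$. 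There is no circularity, since Theorem \ref{teoriob} is established earlier and independently. The trade-off: the paper's argument is self-contained and explains \emph{why} the subdiagonal is a product of two linear factors, while your computation is uniform in the offset $m$, delivers the messier second identity (and, in principle, further subdiagonals) with no extra casework, and actually supplies the detail the paper omits. One small point worth flagging if you write this up: the paper's stated column generating function $C_k(x)=\sum_{n\ge0}{n+k\brack k}^B_{\ge2,r}\frac{x^n}{n!}$ carries an index shift inconsistent with the standard convention $\sum_{N}l_{N,k}\frac{z^N}{N!}=g f^k/k!$ that your coefficient formula uses; the standard convention is the one that matches the displayed matrix, so your formula is the right one, but you should state explicitly which normalization you are invoking.
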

\begin{proof}
There are two cases to consider, either
\begin{itemize}
    \item There is an element that belongs to one of the $r$ special cycles.  This happens in 
    $(n+1)r2^{2}$ ways; \texttt{or}, 
    \item Two elements form a non special cycle. This can be done in  $2^2\binom{n+1}{2}$ ways.
\end{itemize}
The first identity follows from here. The proof of the second identity  is similar.
\end{proof}

\subsection{Combinatorial interpretation of the inverse matrix}
The matrix $\C_{\geq 2, r}$ is a Riordan array and since the set of all Riordan matrices is a group, its 
inverse exists. This subsection presents a  combinatorial interpretation of the unsigned inverse matrix 
of $\C_{\geq 2, r}$ in terms of plane increasing trees. The particular case $r=0$ was presented in 
\cite{mezo-2019a}. The notation  $\T_{\geq 2, r}:=\left[T_{\geq 2, r}(n,k)\right]_{n, k \geq 0}$ is used 
for the inverse of $\C_{\geq 2, r}$.  Then  \eqref{invRiordan} shows  that  $\T_{\geq 2, r}$ is the
exponential Riordan array given by
$$\T_{\geq 2, r}=\left(\left(-1 + \frac{1}{1+W\left(-\frac{1}{2}e^{-\frac 12 - z}\right)}\right)^r, \frac{1}{2} +  W\left(-\frac{1}{2}e^{-\frac{1}{2}-z}\right)\right),$$
with  $W(z)$ is the Lambert $W$ function. This is defined by
\begin{equation}
W(x)e^{W(x)}=x,
\end{equation}
\noindent
for all real (or complex) $x$. More information on this function appears in \cite{corless-1996a}.

Let $\Tc_{\geq 2, r}:=\left[\overline{T}_{\geq 2, r}(n,k)\right]_{n, k \geq 0}$  be the unsigned matrix of $\T_{\geq 2, r}$, that is
\begin{align*}
\Tc_{\geq 2, r}:=&\left[\overline{T}_{\geq 2, r}(n,k)\right]_{n, k \geq 0}=\left[(-1)^{n+k} T_{\geq 2, r}(n,k)\right]_{n, k \geq 0}\\
=&\left(\left(-1 + \frac{1}{1+W\left(-\frac{1}{2}e^{-\frac 12 + z}\right)}\right)^r, -\frac{1}{2} -  W\left(-\frac{1}{2}e^{-\frac{1}{2}+z}\right)\right).
\end{align*}
The first few rows of the matrix $\Tc_{\geq 2, 3}=\left[\overline{T}_{\geq 2, 3}(n,k)\right]_{n, k \geq 0}$ are
$$
\left(
\begin{array}{cccccccc}
 1 & 0 & 0 & 0 & 0 & 0 & 0 \\
 12 & 1 & 0 & 0 & 0 & 0 & 0 \\
 192 & 28 & 1 & 0 & 0 & 0 & 0 \\
 3936 & 752 & 48 & 1 & 0 & 0 & 0 \\
 99456 & 22304 & 1904 & 72 & 1 & 0 & 0 \\
 3001344 & 748672 & 76320 & 3920 & 100 & 1 & 0 \\
 105544704 & 28412416 & 3265792 & 203040 & 7120 & 132 & 1 \\
\end{array}
\right).
$$
Now let $\overline{T}_k(z)$ be the generating function of the $k^{th}$ column of $\Tc_{\geq 2, r}$. Then
$$\overline{T}_0(z)=\left(-1 + \frac{1}{1+W\left(-\frac{1}{2}e^{-\frac 12 + z}\right)}\right)^r= \left(F'(z)\right)^r,$$
where
\begin{equation}
F(z):=- \tfrac{1}{2} -  W\left(-\tfrac{1}{2}e^{-\frac{1}{2}+z}\right).
\end{equation}
\noindent
The generating function $F(z)$ counts  the total number of labeled rooted trees of subsets of an 
$n$-set \cite{mcmorris-1981a}. In the sequence A005172 of OEIS, Peter Bala states  that $F(z)$  satisfies the 
functional equation
\begin{equation}
x=\int_{0}^{F(x)}\frac{1}{\Phi(t)}dt,
\end{equation}
where $\Phi(t)=(1+2t)/(1-2t)$. Then  \cite[Theorem 1]{bergeron-1992a} provides an alternative combinatorial 
interpretation for this sequence. It turns out that this sequence also counts the plane increasing trees on
 $n$ vertices, such that each vertex of out-degree $k\geq 1$ is colored with one of $2^{k+1}$ colors.   
Therefore $F'(z)$, the derivative of $F(z)$, counts plane increasing trees on $n+1$ vertices.

The power series expansion of $F'(z)$ starts as 
$$F'(z)=1+ 4 \frac{z}{1!} + 32 \frac{z^2}{2!} + 416 \frac{z^3}{3!} +   7552 \frac{z^4}{4!} + 176128 \frac{z^5}{5!}  + \cdots  $$
Figure \ref{trees} shows the corresponding trees with $3$ vertices. The parenthesis denotes the possible
 colorings for a vertex.
\begin{figure}[htbp]
\centering
\includegraphics[scale=1]{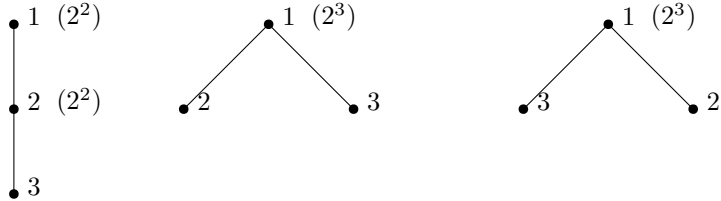}
\caption{Colored plane increasing trees on $3$ vertices.} \label{trees}
\end{figure}

The identity 
\begin{equation}
\overline{T}_k(z)=\left(F'(z)\right)^r\frac{1}{k!}\left(F(z)\right)^k,
\end{equation}
\noindent
leads to  the next  result.

\begin{theorem}
The sequence $\overline{T}_{\geq 2, r}(n,k)$ counts the number of  $r$-ordered and $k$-unordered 
elements in a $r+k$-forest  of increasing trees on $n$ vertices, such that each vertex of out-degree 
$k\geq 1$ is colored with one of $2^{k+1}$ colors and the $r$ ordered connected components are rooted.
\end{theorem}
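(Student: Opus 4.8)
The plan is to read off the combinatorial content directly from the column generating function identity $\overline{T}_k(z)=\left(F'(z)\right)^r\frac{1}{k!}\left(F(z)\right)^k$ obtained above, by translating this product of exponential generating functions into the corresponding labeled (partitional) product of combinatorial structures.

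First I would pin down the two building blocks. The functional equation $x=\int_0^{F(x)} dt/\Phi(t)$ with $\Phi(t)=(1+2t)/(1-2t)$, together with \cite{bergeron-1992a}, identifies $F(z)$ as the exponential generating function of colored plane increasing trees in which a vertex of out-degree $d\ge 1$ receives one of $2^{d+1}$ colors. This is consistent with $\Phi$ being the degree-weight series of the family, since $\Phi(t)=1+\sum_{d\ge 1}2^{d+1}t^d$. Consequently $F'(z)$ is the exponential generating function of the same family carrying a distinguished root vertex, that is, of rooted colored plane increasing trees (equivalently, such trees on $n+1$ vertices), as already noted.

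Next I would invoke the labeled-product principle for exponential generating functions: multiplying two such series amounts to splitting the label set $[n]$ into two blocks and placing the respective structures on each block. Raising $F'(z)$ to the $r$-th power therefore produces an ordered $r$-tuple of rooted trees, while the factor $1/k!$ in $\frac{1}{k!}\left(F(z)\right)^k$ symmetrizes the $k$ factors and so yields an unordered set of $k$ trees. Applying this to $\left(F'(z)\right)^r\cdot\frac{1}{k!}\left(F(z)\right)^k$ shows that $\overline{T}_k(z)$ enumerates the structures obtained by distributing $[n]$ over $r+k$ components: an ordered sequence of $r$ rooted colored plane increasing trees (the $F'$ factors) together with an unordered set of $k$ colored plane increasing trees (the $F$ factors). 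Extracting the coefficient of $z^n/n!$ then gives that $\overline{T}_{\ge 2,r}(n,k)$ counts exactly the claimed $(r+k)$-forests.

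The main obstacle is bookkeeping rather than substance. One must check that the color weights multiply correctly across components under the labeled product, which holds because each vertex carries its color independently, and that the derivative $F'$ is legitimately read as a rooted (pointed) increasing tree, so that the $r$ ordered components are rooted while the $k$ unordered ones require no extra marking. A quick numerical check against $\Tc_{\ge 2,3}$, for instance $\overline{T}_{\ge 2,3}(2,0)=192=[z^2/2!]\left(F'(z)\right)^3$, confirms the normalization and the ordered-versus-unordered split.
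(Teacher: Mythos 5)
Your proposal is correct and follows essentially the same route as the paper: the paper derives the theorem directly from the identity $\overline{T}_k(z)=\left(F'(z)\right)^r\frac{1}{k!}\left(F(z)\right)^k$, with $F$ identified as the EGF of colored plane increasing trees via the functional equation with $\Phi(t)=(1+2t)/(1-2t)$ and the Bergeron--Flajolet--Salvy correspondence. You merely make explicit the labeled-product bookkeeping (ordered $r$-tuple from $(F')^r$, unordered $k$-set from $F^k/k!$) that the paper leaves implicit, and your numerical check of $\overline{T}_{\geq 2,3}(2,0)=192$ is consistent with the displayed matrix.
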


The next recurrence now follows from  \eqref{recriordan1} and \eqref{recriordan2}.

\begin{theorem}\label{rec1inversa}
The sequence $\overline{T}_{\geq 2, r}(n,k)$ for $n\geq k\geq 0$ is determined by the recurrence
$$\overline{T}_{\geq 2, r}(n+1,k) = \overline{T}_{\geq 2, r}(n,k-1) + \frac{1}{k!}\sum_{i=k}^ni!2^{i-k+2}((i-k+1)r + k)\overline{T}_{\geq 2, r}(n,i),$$
and the initial conditions $\overline{T}_{\geq 2, r}(n,0)=\delta_{n,0}$ for $n\geq 0$, and $\overline{T}_{\geq 2, r}(n,k)=0$ for $n, k<0$.
\end{theorem}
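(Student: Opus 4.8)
The plan is to feed the exponential Riordan array $\Tc_{\geq 2, r}=(g,f)$, with $g(z)=(F'(z))^r$ and $f(z)=F(z)$, into the production-matrix formulas \eqref{recriordan1}--\eqref{production1} of Deutsch et al. Concretely, everything reduces to computing the two series $A(t)=f'(\bar{f}(t))$ and $Z(t)=g'(\bar{f}(t))/g(\bar{f}(t))$ from \eqref{production1}, reading off their coefficients $(a_i)$ and $(z_i)$, and substituting into \eqref{recriordan2}. The one idea that keeps this computation short is to never write down the Lambert-$W$ closed form for $F$, and to work instead from the functional equation $x=\int_0^{F(x)} dt/\Phi(t)$ with $\Phi(t)=(1+2t)/(1-2t)$.

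Differentiating that functional equation gives the differential equation $F'(z)=\Phi(F(z))=(1+2F(z))/(1-2F(z))$. Since $\bar{f}$ is the compositional inverse of $f=F$, one has $F(\bar{f}(t))=t$, and hence $A(t)=F'(\bar{f}(t))=(1+2t)/(1-2t)$ with no inverse function ever written out. Expanding, $A(t)=1+\sum_{m\geq 1}2^{m+1}t^m$, so $a_0=1$ and $a_m=2^{m+1}$ for $m\geq 1$. For $Z$, write $g'/g=r\,F''/F'$; differentiating the differential equation yields $F''=4F'/(1-2F)^2$, so $F''/F'=4/(1-2F)^2$, and using $F(\bar{f}(t))=t$ again gives $Z(t)=4r/(1-2t)^2=\sum_{n\geq 0}4r(n+1)2^n t^n$. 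Thus $z_n=r(n+1)2^{n+2}$.

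Substituting into \eqref{recriordan2}, the leading term is $a_0\,\overline{T}_{\geq 2, r}(n,k-1)=\overline{T}_{\geq 2, r}(n,k-1)$, and inside the sum the condition $i\geq k$ forces $i-k+1\geq 1$, so $a_{i-k+1}=2^{i-k+2}$ and the bracket collapses to $z_{i-k}+k\,a_{i-k+1}=2^{\,i-k+2}\big((i-k+1)r+k\big)$, which is exactly the claimed summand. Lower-triangularity of $\Tc_{\geq 2, r}$ truncates the sum at $i=n$, producing the stated recurrence; the same formula at $k=0$ is automatically consistent because the $k\,a_{i-k+1}$ term then vanishes, leaving $\overline{T}_{\geq 2, r}(n+1,0)=\sum_{i=0}^{n} i!\,z_i\,\overline{T}_{\geq 2, r}(n,i)$, the column-$0$ instance governed by \eqref{recriordan1}. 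The boundary data needed to launch the recurrence are $\overline{T}_{\geq 2, r}(0,k)=\delta_{k,0}$ (read off from $g(0)=1$ and $f(0)=0$, i.e.\ the top row of the array) together with the convention that entries with a negative index vanish.

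I expect the only genuine obstacle to be the temptation to compute $\bar{f}$ and the derivatives of $F$ explicitly through the Lambert-$W$ formula, which would turn $A$ and $Z$ into unwieldy transcendental expressions. The clean route is to keep $F$ implicit and exploit the differential equation $F'=(1+2F)/(1-2F)$ together with $F(\bar{f}(t))=t$, so that both $A(t)$ and $Z(t)$ reduce to simple rational functions of $t$ whose coefficients are immediate.
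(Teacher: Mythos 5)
Your proposal is correct and follows exactly the route the paper intends: the paper's ``proof'' is the single remark that the recurrence follows from \eqref{recriordan1} and \eqref{recriordan2}, and you supply the omitted computation of $A(t)=(1+2t)/(1-2t)$ and $Z(t)=4r/(1-2t)^2$ via the differential equation $F'=\Phi(F)$, which is the clean way to do it. As a bonus, your boundary condition $\overline{T}_{\geq 2, r}(0,k)=\delta_{k,0}$ is the right one (the theorem's stated $\overline{T}_{\geq 2, r}(n,0)=\delta_{n,0}$ is a typo, since the zeroth column is $4r, 192,\dots$ for $r=3$ and is in fact generated by the $k=0$ case of the recurrence, as your check confirms).
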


\section{Properties of the  numbers $d_{r,n}^{B}$}
\label{sec-drnb}

This section discusses the expression
\begin{equation}
d_{r,n}^{B} = \sum_{k=0}^{n} \begin{bmatrix} n \\ k   \end{bmatrix}_{\geq 2, r}^{B},
\end{equation}
\noindent
for the total number of $r$-derangements of type $B$ on $[n+r]$.  This is the row sum of the
matrix in \eqref{matrix-1}. The case $r=0$ appears in  \eqref{chow-1}.

\begin{theorem}
The recurrence
\begin{equation}
\label{diff1}
d_{r,n}^{B} = d_{r-1,n}^{B} + 2nd_{r,n-1}^{B} + 2nd_{r-1,n-1}^{B},
\end{equation}
\noindent
holds. It is supplemented by the initial conditions $d_{r,0}^{B} =1$ and $\begin{displaystyle}
d_{0,n}^{B} = n! \sum_{k=0}^{n} \frac{(-1)^{k} }{k!} 2^{n-k}. \end{displaystyle}$
\end{theorem}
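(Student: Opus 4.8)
The plan is to obtain the recurrence from the exponential generating function of the row sums $d_{r,n}^{B}$, which Theorem \ref{teoriob} makes directly accessible. By the summation property of exponential Riordan arrays, multiplying $\mathcal{C}_{\ge 2, r}=(g(z),f(z))$, with $g(z)=\left(\frac{1+2z}{1-2z}\right)^r$ and $f(z)=-\ln(1-2z)-z$, by the all-ones column vector (whose exponential generating function is $e^{z}$) produces the column vector of row sums. Writing $D_r(z):=\sum_{n\ge0} d_{r,n}^{B}\frac{z^n}{n!}$, this yields
\begin{equation*}
D_r(z)=g(z)\,e^{f(z)}=\left(\frac{1+2z}{1-2z}\right)^r\frac{e^{-z}}{1-2z}.
\end{equation*}
For $r=0$ this collapses to $e^{-z}/(1-2z)$, matching the generating function of $d_n^{B}$ recorded in the introduction.

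The second step reads off the recurrence. From the explicit product form it is immediate that
\begin{equation*}
(1-2z)\,D_r(z)=(1+2z)\,D_{r-1}(z),
\end{equation*}
because the two sides differ only by the ratio $\frac{1+2z}{1-2z}$ present in $D_r$ but not in $D_{r-1}$. Extracting the coefficient of $z^{n}/n!$ on each side, and using that multiplication by $2z$ sends $\frac{z^{n-1}}{(n-1)!}$ to $\frac{2z^n}{(n-1)!}=2n\frac{z^n}{n!}$, gives
\begin{equation*}
d_{r,n}^{B}-2n\,d_{r,n-1}^{B}=d_{r-1,n}^{B}+2n\,d_{r-1,n-1}^{B},
\end{equation*}
which rearranges to the asserted recurrence \eqref{diff1}. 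For $n=0$ the two $2n$ terms vanish and the identity reduces to $d_{r,0}^{B}=d_{r-1,0}^{B}$, consistently.

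For the initial conditions, evaluating at $z=0$ gives $d_{r,0}^{B}=D_r(0)=1$ for every $r$; combinatorially this reflects that, with no non-special elements present, each of the $r$ special points must sit alone in a colored $1$-cycle, which can be done in exactly one way. The formula for $d_{0,n}^{B}$ then follows by expanding $D_0(z)=e^{-z}/(1-2z)$ as the Cauchy product of $\sum_{k}(-1)^k z^k/k!$ and $\sum_{m}2^m z^m$ and multiplying the coefficient of $z^n$ by $n!$, reproducing \eqref{chow-1}.

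I do not anticipate a genuine obstacle: once the row-sum generating function is in hand the argument is a routine coefficient comparison. The one point demanding care is the bookkeeping of the shift operator $2z$ that converts the factors $(1\mp2z)$ into the $2n$ coefficients. A direct bijective proof is also conceivable, but the fact that \eqref{diff1} couples $r$ with $r-1$ and $n$ with $n-1$ \emph{simultaneously} makes such a version considerably less transparent than this generating-function derivation, so I would present the latter.
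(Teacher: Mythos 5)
Your proof is correct, but it takes a genuinely different route from the paper's. The paper argues combinatorially: it fixes the first special element and splits into cases according to whether its cycle has length $1$, length $2$, or length at least $3$, the last case requiring an inclusion--exclusion correction ($2n(d_{n-1,r}^{B}-d_{n-1,r-1}^{B})$) for permutations counted twice. You instead invoke the summation property of exponential Riordan arrays applied to Theorem \ref{teoriob} to get the row-sum generating function $D_r(z)=\left(\frac{1+2z}{1-2z}\right)^r\frac{e^{-z}}{1-2z}$, and then extract the recurrence from $(1-2z)D_r(z)=(1+2z)D_{r-1}(z)$; the coefficient bookkeeping ($2z$ contributing the factor $2n$) and the treatment of the initial conditions are all correct, and there is no circularity since Theorem \ref{teoriob} is established independently of this recurrence. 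Your approach is more economical in one respect: it delivers, as a byproduct, the exponential generating function of $d_{r,n}^{B}$ that the paper only proves later in the same section (via the explicit formula of Theorem \ref{explicitform}), and it makes the structure of the recurrence --- why $r$ and $n$ decrease simultaneously --- transparent as the transfer of a factor $\frac{1+2z}{1-2z}$. What the paper's argument buys in exchange is self-containedness (no reliance on the Riordan-array machinery of Section \ref{sec-arrays}) and a direct combinatorial meaning for each of the three terms in \eqref{diff1}, which your derivation does not provide.
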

\begin{proof}
Consider a permutation $\pi$ in
$\mathcal{D}_{n,r}^{B}$.  There are  $r$ special elements and assume that
$1$ is the first of them. The recurrence is obtained by distinguishing cases according to the type
of cycle $\mathfrak{c}$ containing $1$. The initial conditions appeared in
Note \ref{note-initcond}. \\

\noindent
\texttt{Case 1}. Assume $\mathfrak{c}$ is of length $1$. Then the cycle must be $( \overline{1} )$, since
there are no fixed points in $\pi$.  The rest of the permutation is in $\mathcal{D}^{B}_{n,r-1}$, counting 
for the term $d_{n,r-1}^{B}$.  \\

\noindent
\texttt{Case 2}. If  $\mathfrak{c}$ is of length $2$, then $\mathfrak{c}$ is of the form
$(1 \, x )$ with $x$ non-special. Then $1$ and $x$ can be  colored or not, for a total of
$n$ choices. The rest of the permutation is in $\mathcal{D}^{B}_{n-1,r-1}$, for  a total of
$4nd_{n-1,r-1}^{B}$ choices. \\

\noindent
\texttt{Case 3}. Finally consider the case when the cycle $\mathfrak{c}$ has at least $3$ elements.  To
produce such a  permutation, choose $k$ to be non-special and drop it from the list of non-special
elements. Then form an arbitrary $B$-$r$-derangement on $n-1+r$ elements in $d_{n-1,r}^{B}$ ways.
After that, insert 
$k$ to the right of $1$ and optionally color it. This accounts for  $2n d_{n-1,r}^{B}$ choices. In this process,
there are certain permutations that have been counted twice. Namely, those for which $1$ is in 
a cycle of length $1$ for a permutation in $\mathcal{D}_{n-1,r}^{B}$. Inserting $k$ then
produces a cycle of length $2$, already counted in Case 2. Therefore these must be 
excluded. Thus, the total count is $2n \left( d_{n-1,r}^{B} - d_{n-1,r-1}^{B} \right)$. \\

The proof is complete.
\end{proof}

\begin{Cor}
For fixed $n \in \mathbb{N}$, the function $d_{n,r}^{B}$ is a polynomial in $r$ of degree $n$.
\end{Cor}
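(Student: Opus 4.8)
The plan is to argue by induction on $n$, viewing the recurrence \eqref{diff1} as a first-order backward-difference equation in the variable $r$ for each fixed $n$. The base case $n=0$ is immediate: the initial condition $d_{r,0}^{B}=1$ is a constant, hence a polynomial in $r$ of degree $0$ with nonzero leading coefficient $1$. For the inductive step I would assume that, for $m=n-1$, the quantity $d_{r,m}^{B}$ is a polynomial in $r$ of degree exactly $m$, with leading coefficient $c_{m}\neq 0$; note that it is important to carry the nonvanishing of the leading coefficient through the induction, not merely the degree bound.

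Rewriting \eqref{diff1} as the backward difference
\begin{equation}
d_{r,n}^{B}-d_{r-1,n}^{B}=2n\left(d_{r,n-1}^{B}+d_{r-1,n-1}^{B}\right)=:P(r),
\end{equation}
the right-hand side is, by the inductive hypothesis, $2n$ times the sum of a single degree-$(n-1)$ polynomial evaluated at $r$ and at $r-1$. Since the two summands share the same top-degree coefficient $c_{n-1}$, these leading terms reinforce rather than cancel, so $P$ has degree exactly $n-1$ with leading coefficient $4n\,c_{n-1}\neq 0$.

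The next step is to solve the difference equation by discrete summation, using the constant initial value $d_{0,n}^{B}$ supplied in the theorem:
\begin{equation}
d_{r,n}^{B}=d_{0,n}^{B}+\sum_{s=1}^{r}P(s).
\end{equation}
By the finite-calculus rule $\sum_{s=1}^{r}s^{m}=\tfrac{1}{m+1}r^{m+1}+\cdots$, summing a polynomial of degree $n-1$ yields a polynomial of degree $n$, so $d_{r,n}^{B}$ is a polynomial in $r$. Tracking only the top-degree term shows the new leading coefficient is $c_{n}=\tfrac{4n\,c_{n-1}}{n}=4c_{n-1}$; combined with $c_{0}=1$ this gives $c_{n}=4^{n}$, consistent with $d_{1,r}^{B}=1+4r$.

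The one genuinely delicate point, which I expect to be the main obstacle, is establishing that the degree does not drop below $n$. This is exactly the assertion that $c_{n}\neq 0$, and it reduces to the claim that the two degree-$(n-1)$ contributions $d_{r,n-1}^{B}$ and $d_{r-1,n-1}^{B}$ do not cancel at top order. Because they are the same polynomial evaluated at consecutive arguments, their leading coefficients coincide and therefore add, ruling out cancellation. This is precisely why the induction must propagate the nonvanishing leading coefficient $c_{m}$ rather than only the degree, and it is the step where a careless argument would fail.
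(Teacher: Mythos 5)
Your proof is correct and follows essentially the same route as the paper, which simply observes that the claim ``follows from $d_{r,0}^{B}=1$ and \eqref{diff1} written as $d_{r,n}^{B}-d_{r-1,n}^{B}=2n(d_{r,n-1}^{B}+d_{r-1,n-1}^{B})$.'' Your version supplies the details the paper leaves implicit --- the discrete summation step and, usefully, the tracking of the leading coefficient $c_{n}=4^{n}$ (consistent with the tabulated $d_{r,n}^{B}$) to justify that the degree is exactly $n$ rather than merely at most $n$.
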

\begin{proof}
This follows from $d_{r,0} = 1$ and \eqref{diff1}, written in the form
\begin{equation}
\label{diff2}
d_{r,n}^{B} - d_{r-1,n}^{B} =  2n( d_{r,n-1}^{B} + d_{r-1,n-1}^{B}).
\end{equation}
\end{proof}

\begin{example}
The value $d_{r,1}^{B} = 1 + 4r$ and the difference equation \eqref{diff2} give
\begin{equation}
d_{r,2}^{B}  - d_{r-1,2}^{B} = 4( 1+4r + 1+4(r-1)) = 32r-8.
\label{diff3}
\end{equation}
\noindent
Therefore $d_{r,2}^{B}$ is a quadratic polynomial in $r$.  The ansatz
$d_{r,2}^{B} = a_{0}r^{2}+a_{1}r+a_{2}$ in \eqref{diff3} gives $a_{2} = 16$ and $a_{1} = 8$. The
remaining coefficient comes  from \eqref{chow-1}; so that 
\begin{equation}
d_{r,2}^{B} = 16r^{2} + 8r+5.
\end{equation}
\noindent
Similarly,
\begin{eqnarray}
& & \\
d_{r,3}^{B} & = & 64r^{3}+48r^2+92r+29,  \nonumber \\
d_{r,4}^{B} & = & 256r^{4}+256r^3+992r^2+592r+233, \nonumber  \\
d_{r,5}^{B} & = & 1024r^5+1280r^4+8320r^3+7200r^2+7796r+2329, \nonumber \\
d_{r,6}^{B} & = & 4096r^6+6144r^5+60160r^4+67840r^3+141424r^2+83672r+27949.  \nonumber
\end{eqnarray}
\end{example}

An explicit formula for  $d_{r,n}^{B}$ is presented next.

\begin{theorem}\label{explicitform}
For all $n, \, r \geq 0$,
\begin{equation}
d_{r,n}^{B} = 2^{n} \sum_{i=0}^{r} \binom{r}{i} n^{\underline{i}} 2^{i}
\sum_{k=0}^{n-i} \binom{n-i}{k}(-1)^{k} (i+1)^{\overline{n-i-k}} 2^{-k}.
\end{equation}
\noindent
Here $n^{\underline{i}} = n(n-1) \cdots (n-i+1)$ and $n^{\overline{j}} = n(n+1) \cdots (n+j-1)$.
\end{theorem}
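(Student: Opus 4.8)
The plan is to derive the exponential generating function of the row sums $d_{r,n}^{B}$ directly from the Riordan array description in Theorem \ref{teoriob}, and then extract the coefficient of $z^{n}$ by a careful expansion.

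First I would apply the summation property of exponential Riordan arrays. The row sum $d_{r,n}^{B}=\sum_{k}{n\brack k}_{\geq 2,r}^{B}$ is obtained by multiplying the matrix $\mathcal{C}_{\geq 2,r}=(g(z),f(z))$, with $g(z)=\left(\frac{1+2z}{1-2z}\right)^{r}$ and $f(z)=-\ln(1-2z)-z$, on the right by the all-ones column vector, whose exponential generating function is $h(z)=e^{z}$. By the fundamental theorem of exponential Riordan arrays, the resulting column has generating function $g(z)\,(h\circ f)(z)=g(z)\,e^{f(z)}$. Since $e^{f(z)}=e^{-z}/(1-2z)$, this yields
\[
D_{r}(z):=\sum_{n\geq 0}d_{r,n}^{B}\frac{z^{n}}{n!}=e^{-z}\,\frac{(1+2z)^{r}}{(1-2z)^{r+1}}.
\]
As a check, the case $r=0$ recovers $e^{-z}/(1-2z)$, in agreement with \eqref{chow-1}.

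The \emph{key} algebraic step is to rewrite the rational factor so that the exponent of $(1-2z)$ is governed by a new summation index. Writing $1+2z=(1-2z)+4z$ and expanding by the binomial theorem gives $(1+2z)^{r}=\sum_{i=0}^{r}\binom{r}{i}(4z)^{i}(1-2z)^{r-i}$, and therefore
\[
D_{r}(z)=e^{-z}\sum_{i=0}^{r}\binom{r}{i}(4z)^{i}(1-2z)^{-(i+1)}.
\]
This is precisely the manoeuvre that produces the rising factorial $(i+1)^{\overline{\,\cdot\,}}$ in the final formula, because $(1-2z)^{-(i+1)}=\sum_{m\geq 0}\frac{(i+1)^{\overline{m}}}{m!}(2z)^{m}$; it converts the fixed exponent $r+1$ into the variable exponent $i+1$.

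Finally I would expand $e^{-z}=\sum_{\ell}\frac{(-1)^{\ell}}{\ell!}z^{\ell}$, multiply the three series, and read off $[z^{n}]D_{r}(z)=d_{r,n}^{B}/n!$. Setting $k=\ell$ and $m=n-i-k$, collecting the powers of two via $4^{i}2^{m}=2^{n}2^{i}2^{-k}$, and converting the factorials through $n!/((n-i-k)!\,k!)=n^{\underline{i}}\binom{n-i}{k}$ (using $n^{\underline{i}}(n-i)!=n!$) reproduces the stated double sum. I expect the only delicate point to be the bookkeeping of the three-fold Cauchy product together with the accounting of the powers of two; the conceptual content lies entirely in the two generating-function identities above, and in particular in the substitution $1+2z=(1-2z)+4z$.
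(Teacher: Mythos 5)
Your proof is correct, but it proceeds in the opposite logical direction from the paper's. The paper proves Theorem \ref{explicitform} by a direct combinatorial argument: it fixes $i$, the number of special elements lying in cycles of length greater than one (the other $r-i$ special elements sit in barred singletons), chooses and places elements to obtain the count $\binom{r}{i}n^{\underline{i}}2^{2i}(i+1)^{\overline{n-i}}2^{n-i}$, and then removes the spurious fixed points among the $n-i$ remaining non-special elements by inclusion--exclusion, which is where the alternating inner sum over $k$ comes from. You instead start from the Riordan array description of Theorem \ref{teoriob}, obtain the exponential generating function $e^{-z}(1+2z)^{r}(1-2z)^{-(r+1)}$ of the row sums via the summation property, and extract coefficients after the substitution $1+2z=(1-2z)+4z$; I checked the coefficient extraction and the bookkeeping of the powers of two, and it reproduces the stated formula exactly. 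Two remarks on the comparison. First, your route is not circular: the paper derives the generating function \emph{from} Theorem \ref{explicitform} in the subsection that follows, but you derive it independently from Theorem \ref{teoriob}, whose proof is a separate column-by-column combinatorial argument; your approach therefore proves both the explicit formula and the generating-function theorem in one stroke. Second, the two proofs illuminate each other: your binomial expansion of $(1+2z)^{r}$ over the index $i$ is precisely the analytic shadow of the paper's choice of which $i$ special elements escape the singleton cycles, and the factor $(1-2z)^{-(i+1)}$ encodes the placement count $(i+1)^{\overline{n-i-k}}$. The paper's argument has the advantage of assigning a direct combinatorial meaning to every factor in the formula; yours is shorter once the Riordan structure is in hand.
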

\begin{proof}
We consider cases with  $r-i$ special elements ($0\le i\le r$) are in cycles of length one (and therefore 
necessarily barred). The other $i$ special elements are in cycles of length greater than one. Choose 
first these special elements (in $\binom{r}{i}$ ways), and then place the $i$ non-special elements into their 
cycles. This gives  $n(n-1)\cdots(n-i+1)=n^{\underline{i}}$ choices. Then mark both the $i$ special elements 
and the non-special ones with a bar, for a total of $2^i\cdot2^i$ options. The other $n-i$ non-special 
elements are placed into the $i$ special cycles or outside  them. This yields $(i+1)^{\overline{n-i}}$ options. 
There are $2^{n-i}$ ways to  mark these elements or not. Up to now, the count is
\begin{equation}
\binom{r}{i}n^{\underline{i}} 2^{2i}(i+1)^{\overline{n-i}}2^{n-i}\label{kzero}.
\end{equation}
Observe that in the non-special part of the permutation, the placement above  might have introduced 
some fixed points and these must be removed. Among the above 
permutations, the number of those type B derangements which have exactly $k$ fixed points among
 the $n-i$ elements is
\[\binom{n-i}{k}n^{\underline{i}}2^{2i}(i+1)^{\overline{n-i-k}}2^{n-i-k}.\]
The inclusion-exclusion principle now completes the proof.
\end{proof}

\subsection{The generating function and asymptotics of $d_{r,n}^{B}$}
Theorem \ref{explicitform} is now  used to produce  the exponential generating 
function of  $\{ d_{r,n}^{B} \}$. 

\begin{theorem}
The formula
\begin{equation}
\sum_{n=0}^\infty d_{r,n}^{B} \frac{x^n}{n!}=\frac{e^{-x}}{1-2x}\left(\frac{1+2x}{1-2x}\right)^r,
\end{equation}
\noindent
holds.
\end{theorem}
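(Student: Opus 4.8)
The plan is to read the generating function off the Riordan structure of $\mathcal{C}_{\ge 2, r}$ established in Theorem \ref{teoriob}, and to confirm the same answer by a self-contained manipulation of the explicit formula in Theorem \ref{explicitform}. Since $d_{r,n}^{B}=\sum_{k=0}^{n}\begin{bmatrix} n \\ k \end{bmatrix}_{\geq 2, r}^{B}$ is the $n$-th row sum of $\mathcal{C}_{\ge 2, r}$, the first and shortest route is to apply the summation property of exponential Riordan arrays to the all-ones column vector $(1,1,1,\dots)^{t}$, whose exponential generating function is $e^{z}$. Writing $\mathcal{C}_{\ge 2, r}=(g(z),f(z))$ with $g(z)=\left(\frac{1+2z}{1-2z}\right)^{r}$ and $f(z)=-\ln(1-2z)-z$ as in Theorem \ref{teoriob}, the summation property yields the row-sum generating function $g(z)\,e^{f(z)}$. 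Because $e^{f(z)}=e^{-z}e^{-\ln(1-2z)}=\frac{e^{-z}}{1-2z}$, this equals $\frac{e^{-x}}{1-2x}\left(\frac{1+2x}{1-2x}\right)^{r}$, which is the claim. On this route there is essentially no obstacle beyond the harmless simplification of $e^{f(z)}$.

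The second route follows the narrative of this section and begins with Theorem \ref{explicitform}. First I would substitute the explicit formula into $\sum_{n}d_{r,n}^{B}x^{n}/n!$ and reindex by $m=n-i$, using $n^{\underline{i}}x^{n}/n!=x^{n}/(n-i)!=x^{i+m}/m!$ to decouple the dependence on $n$. After collecting powers of two through $2^{n}=2^{i}2^{m}$, the expression factors as $\sum_{i=0}^{r}\binom{r}{i}4^{i}x^{i}\sum_{m\geq 0}\frac{(2x)^{m}}{m!}S_{i,m}$, where $S_{i,m}=\sum_{k=0}^{m}\binom{m}{k}(-1)^{k}2^{-k}(i+1)^{\overline{m-k}}$.

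The key step is to read $S_{i,m}$ as a binomial (exponential) convolution. The factor $(-1)^{k}2^{-k}$ contributes $\sum_{k\geq 0}\frac{(-1)^{k}2^{-k}(2x)^{k}}{k!}=e^{-x}$, while the factor $(i+1)^{\overline{\cdot}}$ contributes, via the negative binomial identity $\sum_{j\geq 0}\frac{(i+1)^{\overline{j}}}{j!}(2x)^{j}=(1-2x)^{-(i+1)}$ already used in the proof of Theorem \ref{teoriob}. Hence $\sum_{m\geq 0}\frac{(2x)^{m}}{m!}S_{i,m}=\frac{e^{-x}}{(1-2x)^{i+1}}$. Pulling out the common factor $\frac{e^{-x}}{1-2x}$ leaves $\frac{e^{-x}}{1-2x}\sum_{i=0}^{r}\binom{r}{i}\left(\frac{4x}{1-2x}\right)^{i}$, and the binomial theorem collapses this to $\left(1+\frac{4x}{1-2x}\right)^{r}=\left(\frac{1+2x}{1-2x}\right)^{r}$, completing the identity.

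The main obstacle on the second route is purely bookkeeping: tracking the three separate powers of two (the global $2^{n}$, the $2^{i}$ inside the formula, and the $2^{-k}$ in the inner sum) and correctly identifying which subsum produces $e^{-x}$ and which produces the negative-binomial factor. The Riordan route sidesteps all of this, so that is the argument I would present as the proof, with the explicit-formula computation available as an independent verification.
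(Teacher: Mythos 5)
Both of your routes are correct, but the one you chose to present as the proof is not the one the paper uses. The paper's proof is your second route: it substitutes the explicit formula of Theorem \ref{explicitform} into the exponential generating function, identifies the innermost sum as $\frac{x^i}{(1-x)^{i+1}}\cdot\frac{x^k}{k!}$ (working with $d_{r,n}^{B}/2^n$ and substituting $2x$ at the very end, rather than carrying the powers of $2$ throughout as you do --- a cosmetic difference), recognizes the $k$-sum as $e^{-x/2}$, and finishes with the binomial theorem exactly as you describe. Your primary route --- applying the summation property of exponential Riordan arrays to the all-ones vector, so that the row-sum generating function is $g(z)e^{f(z)}=\left(\frac{1+2z}{1-2z}\right)^{r}\cdot\frac{e^{-z}}{1-2z}$ --- is a genuinely different and noticeably shorter argument. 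It buys economy and makes the result an immediate corollary of Theorem \ref{teoriob}, at the cost of resting on the Riordan-array identification (whose proof in turn required the combinatorial analysis of the columns); the paper's computation is self-contained modulo Theorem \ref{explicitform} and gives the series rearrangement explicitly. Both are sound, and your bookkeeping in the second route (the three powers of two, the exponential convolution splitting into $e^{-x}$ and $(1-2x)^{-(i+1)}$) checks out.
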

\begin{proof}
Start with 
\[\sum_{n=0}^\infty\frac{d_{r,n}^{B}}{2^n}\frac{x^n}{n!}=\sum_{i=0}^r\binom{r}{i}2^{i}\sum_{k=0}^\infty\left(-\frac12\right)^k\sum_{n=i+k}^\infty\binom{n-i}{k}n^{\underline{i}}(i+1)^{\overline{n-i-k}}\frac{x^n}{n!}.\]
The innermost sum is
\[\sum_{n=i+k}^\infty\binom{n-i}{k}n^{\underline{i}}(i+1)^{\overline{n-i-k}}\frac{x^n}{n!}=\frac{x^i}{(1-x)^{i+1}}\frac{x^k}{k!}.\]
Multiplying the right hand side by $\left(-\frac12\right)^k$ and summing over $k$ produces 
 $e^{-x/2}$. Thus
\[\sum_{n=0}^\infty\frac{d_{r,n}^{B}}{2^n}\frac{x^n}{n!}=\sum_{i=0}^r\binom{r}{i}2^{i}\frac{e^{-x/2}x^i}{(1-x)^{i+1}}=\frac{e^{-x/2}}{1-x}\left(\frac{1+x}{1-x}\right)^r.\]
Substituting $2x$ in place of $x$ produces the result.
\end{proof}

The asymptotics of the type B $r$-derangements,  when $r$ is fixed and $n$ tends to infinity, are presented 
next. Standard methods of the analysis of the principal part of the generating function are used. Details 
appeared  in \cite[Theorem 5.2.1]{wilf-1990a}.

\begin{corollary}For any fixed $r\ge0$ and large $n$
\[\frac{d_{r,n}^{B}}{n!}\sim\frac{(-2)^n}{\sqrt e}\sum_{i=0}^r\binom{r}{i}2^{i}\left[\binom{-i-1}{n}-\frac{2i-1}{2}\binom{-i}{n}\right].\]
\end{corollary}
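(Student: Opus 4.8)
The plan is to read the asymptotics off the dominant singularity of the generating function, following the principal-part method of \cite[Theorem 5.2.1]{wilf-1990a}. Rather than work with $G(x)=\frac{e^{-x}}{1-2x}\left(\frac{1+2x}{1-2x}\right)^r$ directly, I would use the intermediate form produced in the proof of the generating-function theorem, namely $H(x):=\sum_{n\ge0}\frac{d_{r,n}^{B}}{2^n}\frac{x^n}{n!}=\frac{e^{-x/2}}{1-x}\left(\frac{1+x}{1-x}\right)^r$, whose unique singularity sits at $x=1$ and is cleaner to expand; the factor $2^n$ is restored at the very end. Writing $(1+x)^r=\big((1-x)+2x\big)^r$ and expanding by the binomial theorem gives the decomposition $H(x)=\sum_{i=0}^r\binom{r}{i}2^i\,\frac{e^{-x/2}x^i}{(1-x)^{i+1}}$, so that each summand is a pole of order $i+1$ at $x=1$ multiplied by the entire function $\phi_i(x):=e^{-x/2}x^i$.

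The core step is a local expansion at $x=1$. For each $i$ I would Taylor-expand the analytic factor to first order, using $\phi_i(1)=e^{-1/2}$ and $\phi_i'(1)=e^{-1/2}\tfrac{2i-1}{2}$, to obtain $\phi_i(x)=e^{-1/2}\big(1+\tfrac{2i-1}{2}(x-1)\big)+O((x-1)^2)$ near $x=1$. Dividing by $(1-x)^{i+1}$ and using $(x-1)=-(1-x)$ turns the two most singular terms into $\frac{e^{-1/2}}{(1-x)^{i+1}}-\frac{e^{-1/2}(2i-1)/2}{(1-x)^{i}}$. Extracting coefficients via $[x^n](1-x)^{-m}=\binom{n+m-1}{m-1}=(-1)^n\binom{-m}{n}$ for $m=i+1$ and $m=i$, and summing over $i$, yields $\frac{d_{r,n}^{B}}{2^n n!}\sim e^{-1/2}(-1)^n\sum_{i=0}^r\binom{r}{i}2^i\big[\binom{-i-1}{n}-\tfrac{2i-1}{2}\binom{-i}{n}\big]$. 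Multiplying through by $2^n$, absorbing $(-1)^n2^n$ into $(-2)^n$ and writing $e^{-1/2}=1/\sqrt e$ then produces the stated formula.

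The main obstacle is justifying the truncation, that is, that replacing $H$ by its two most singular terms per summand really controls $\frac{d_{r,n}^{B}}{n!}$ asymptotically. The key observation is that $\phi_i$ is entire, so after subtracting the \emph{full} principal part of $\phi_i(x)/(1-x)^{i+1}$ at $x=1$ (which has $i+1$ terms) the remainder is entire; its Taylor coefficients satisfy $|c_n|^{1/n}\to0$, hence are $o(\rho^n)$ for every $\rho>0$ and in particular negligible against the retained terms of order $2^n\,\mathrm{poly}(n)$. It then remains to note that discarding the principal-part terms beyond first order only drops contributions of order $2^n n^{\,i-2}$ and below, whereas the two retained terms already capture the two leading growth orders $n^{r}$ and $n^{r-1}$: the $n^{r}$ order comes solely from the $i=r$ first bracket, and the $n^{r-1}$ order from the $i=r$ second bracket together with the $i=r-1$ first bracket. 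A consistency check at $r=0$ recovers $d_{0,n}^{B}/n!\sim 2^n/\sqrt e$, in agreement with \eqref{limit-derB}.
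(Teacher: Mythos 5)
Your proposal is correct and follows essentially the route the paper intends: the paper's ``proof'' is only a citation of the principal-part method of \cite[Theorem 5.2.1]{wilf-1990a} applied to the generating function, and your decomposition $H(x)=\sum_{i=0}^r\binom{r}{i}2^i\,e^{-x/2}x^i/(1-x)^{i+1}$ is exactly the intermediate form already derived in the paper's proof of the generating-function theorem. Your explicit justification of the truncation (entire remainder after removing the full principal part, plus the observation that only the orders $n^{r}$ and $n^{r-1}$ are fully captured) is more detailed than anything the paper supplies, and the computation checks out.
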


\section{A generalization of the $r$-Stirling numbers}
\label{generalization}

Let $\sigma = c_1c_2\cdots c_s$ be the usual cycle representation of a permutation into 
 $s$ disjoint cycles. The orders of the cycles are denoted by  $ord(c_{j}), \, 1 \leq j \leq s$.  For
  example, $\overline{4}\,3\,\overline{2}\,\overline{7}\,5\,\overline{8}\,1 \,9\,6$ is written as $(1\overline{4}\,\overline{7})(\overline{2}3)(5)(6\overline{8}9)$. Then
   $\,\,\, ord(c_{1})=3, \,\,\, ord(c_{2})=2,  \,\,\,\,ord(c_{3})=1, \,\,\, ord(c_{4})= 3$. The 
  derangements of type $B$ can be characterized by 
$$\D_n^B = \{\sigma =c_1\cdots c_s:\text{for all $i\in [s]$ either $ord(c_{i}) \geq2$ or 
$c_i\subseteq [\bar n]=\{\bar 1,\bar 2, \dots ,\bar n\}$}\}.$$
This leads to the following

\begin{definition}
Let $\sigma=c_1\cdots c_s$ be a permutation of type $B$ of length $n$ with $s$ cycles. The permutation
 $\sigma$ is a \emph{$m$-restricted permutation of type $B$} if for all $1\leq i\leq s$  either  the order of 
 each cycle satisfies $ord(c_{i}) \leq m$ or each item in $c_i$ is  signed, that is $c_i\subseteq [\bar n]$. The 
 permutation $\sigma$ is an \emph{$m$-associated permutation of type $B$} if for all $1\leq i\leq s$  
 either  the order of each cycle satisfies that $ord(c_{i}) \geq m$ or $c_i\subseteq [\bar n]$.
\end{definition}

Denote by $\mathcal{A}_{n,\leq m}^B (\mathcal{A}_{n,\geq m}^B)$ the set of all $m$-restricted 
(associated) permutations of type $B$ in $\D_n^B$. The \emph{$m$-restricted (associated)
 factorial numbers of type $B$}, $ A^B_{n,\leq m}$ is the corresponding cardinalities.
Note that $\D ^B_n = \mathcal{A}^B_{n,\geq 2}$. The \emph{associated (restricted) Stirling numbers of the first kind of type $B$}, ${n \brack k}_{\ge m}^B ({n \brack k}_{\leq m}^B$ , is the number of permutations of 
$A_{n,\geq (\leq) m}^B$   with $k$ cycles.  The basic relations
\begin{align*} 
A^B_{n,\geq m}:= \sum_{k=0}^n{n \brack k}_{\ge m}^B, \quad  \quad A^B_{n,\leq m}:= \sum_{k=0}^n{n \brack k}_{\leq m}^B,
\end{align*}
\noindent
hold.

The \emph{restricted Stirling numbers of the first kind (of type $A$)}  ${n \brack k}_{\leq m}$ enumerate the number of permutations on  $n$ elements with $k$ cycles with the restriction that none of the cycles  
contain more than $m$ items. Similarly, the \emph{associated Stirling numbers of the first kind} 
${n \brack k}_{\geq m}$ equals the  number  that each cycle  contains at  most $m$
 items.  Komatsu et al. \cite{komatsu-2016c} presented 
a variety of combinatorial properties for these sequences. Recently, Moll et al. \cite{moll-2018a} obtained 
 new combinatorial and arithmetical properties for them.

The restricted and associated  Stirling numbers of the first kind satisfy the recurrence relations
\begin{align}
{n \brack k}_{\leq m}&= \sum_{i=0}^{m-1} \frac{(n-1)!}{(n-1-i)!} {n-i-1 \brack k-1}_{\leq m},\label{rec4a}\\
{n \brack k}_{\geq m}&= \sum_{i=m-1}^{n-1} \frac{(n-1)!}{(n-1-i)!} {n-i-1 \brack k-1}_{\geq m}. \label{rec4b}
\end{align}
with  initial values
  \begin{align*}
{0 \brack 0}_{\leq m}=1   \quad  \text{and} \quad    {n \brack 0}_{\leq m}={0 \brack n}_{\leq m}=0, \\
{0 \brack 0}_{\geq m}=1   \quad  \text{and} \quad    {n \brack 0}_{\geq m}= {0 \brack n}_{\geq m}=0,
 \end{align*}
for $ n>0$. 
Introduce now the \textit{incomplete factorial numbers} $A_{i,\leq m}$ and $A_{i,\geq m}$, as 
the total number of incomplete permutations, that is 
\begin{align*} 
A_{n,\leq m}:= \sum_{k=0}^n{n \brack k}_{\le m} \quad A_{n,\geq m}:= \sum_{k=0}^n{n \brack k}_{\geq m}.
\end{align*}

\begin{theorem}
For $n\geq 0$, the identities
\begin{align*}
A_{n,\leq m}^B&=\sum _{i = 0}^n\binom{n}{i}2^iA_{i,\leq m}A_{n-i,\geq m+1},\\
A_{n,\geq m}^B&=\sum _{i=0}^n\binom{n}{i}2^iA_{i,\geq m}A_{n-i,\leq m-1},
\end{align*}
\noindent
hold.
\end{theorem}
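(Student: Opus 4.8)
The plan is to give a bijective, cycle-by-cycle decomposition of each signed permutation into a \emph{freely signed} part and a \emph{fully signed} part, and to read off the two identities from the resulting counts. The starting point is the observation that a type $B$ permutation of $[n]$ is the same datum as an ordinary permutation of $[n]$ (obtained by erasing all bars), together with an independent choice of sign for each of the $n$ symbols: a cycle of length $\ell$ in the shorthand notation admits exactly $2^{\ell}$ signings, and the condition $c_i\subseteq[\bar n]$ singles out the unique signing in which every symbol of the cycle is barred. Summing over the cycles, this recovers the total $2^{n}n!$ and, with the fixed-point signing forced, the known generating function $e^{-x}/(1-2x)$ for $\D^B_n=\mathcal A^B_{n,\ge 2}$, which confirms the bookkeeping.

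First I would prove the restricted identity. Given an $m$-restricted permutation of type $B$, I classify its cycles strictly by length: a cycle with $ord(c_i)\le m$ is admissible under any of its $2^{ord(c_i)}$ signings, whereas a cycle with $ord(c_i)\ge m+1$ is admissible only when fully signed, since it must then satisfy $c_i\subseteq[\bar n]$. Splitting the $n$ symbols into the $i$ lying in short cycles and the $n-i$ lying in long cycles, and choosing the short ones in $\binom{n}{i}$ ways, the short part is an arbitrary signed permutation all of whose cycles have length $\le m$, while the long part is a fully signed permutation all of whose cycles have length $\ge m+1$. Because the signs of the $i$ short symbols are free while those of the long symbols are forced, these two pieces are counted by $2^{i}A_{i,\le m}$ and by $A_{n-i,\ge m+1}$ respectively; summing over $i$ gives the first formula.

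The associated identity follows by the mirror-image classification: in an $m$-associated permutation a cycle with $ord(c_i)\ge m$ is admissible under all $2^{ord(c_i)}$ signings, while a cycle with $ord(c_i)\le m-1$ is admissible only when fully signed. The same split-and-sign argument then produces $2^{i}A_{i,\ge m}$ for the freely signed long cycles and $A_{n-i,\le m-1}$ for the fully signed short cycles, which is the second formula. As an independent check one may pass to exponential generating functions: the admissible single-cycle generating functions are $\sum_{\ell=1}^{m}(2x)^{\ell}/\ell+\sum_{\ell>m}x^{\ell}/\ell$ in the restricted case and $\sum_{\ell=1}^{m-1}x^{\ell}/\ell+\sum_{\ell\ge m}(2x)^{\ell}/\ell$ in the associated case, and exponentiating factors each sum as a product of the exponential generating functions of the two type $A$ families, the weight $2^{i}$ arising from the substitution $x\mapsto 2x$ in the freely signed factor.

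The step I expect to require the most care is ensuring that no permutation is counted twice. The danger is a short cycle that happens to be fully signed, which is at once an admissible short cycle and an admissible fully signed cycle; the resolution is to classify every cycle purely by its length relative to the threshold, rather than by whether it is fully signed, so that the freely signed and fully signed piles are disjoint and the length threshold coincides exactly with the point at which the sign freedom is lost. Once this is in place, verifying that a freely signed permutation on $i$ labeled symbols contributes the clean factor $2^{i}A_{i,\le m}$ (and its associated analogue) is routine.
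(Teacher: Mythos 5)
Your proof is correct and takes essentially the same route as the paper: the paper's argument is the bijection $\sigma\mapsto(A_2,f,\sigma|_{A_1},\sigma|_{A_2})$ splitting the ground set into the elements lying in short cycles (freely signed, giving the factor $2^iA_{i,\le m}$) and those in long cycles (forced to be fully signed, giving $A_{n-i,\ge m+1}$), which is exactly your length-based classification of cycles. Your explicit handling of the potential double count and the generating-function cross-check are more careful than the paper's terse write-up, but the underlying decomposition is the same.
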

\begin{proof}
Let $\sigma \in \mathcal{A}^B_{n,\geq k}$ and consider
 $A_1 = \{s\in [n]: \text{there is} \,\,  i\in [k]\text{ such that }\sigma ^i_s=s\},$ and $A_2= A_1^c$. It 
  is clear that $A_2\subseteq [n]$ and so there is no coloring on them.  Consider now the  the function 
$$\varphi : \mathcal{A}^B_{n,\leq k}\longrightarrow \bigcup _{i = 0}^n \binom{[n]}{n-i}\times [2]^{[i]}\times \mathcal{A}_{i,\leq k}\times \mathcal{A}_{n-1,\geq k+1},$$
given by $\varphi (\sigma)=(A_2,f,\left.\sigma \right|_{A_1},\left.\sigma \right|_{A_2} ),$ where $f(x)=\chi _{[\bar n]}(x).$  It is easy to check that this is a bijection, completing the proof.
\end{proof}

\subsection{The $r$-version}

This subsection consider the analog of the results presented above for the case of $r$-permutations. 

\begin{definition}
Let $n, \, r \in \mathbb{N}$ and $m\geq 1$. A \textit{type} B $\,\, m$-\textit{associated $r$-permutation} on the set $[n+r]$ is a
signed permutation on $[n+r]$,  with the  restriction that  the order  of each cycle  is at least $m$, and  the first $r$ elements (called \textit{special}) are restricted to be in distinct cycles.  The set of all $m$-associated $r$-permutation  of type $B$ on $[n+r]$ is denoted by $\mathcal{A}_{n,\geq m, r}^{B}$ and its cardinality by $A_{n,\geq m,r}^{B}$. \end{definition}

The number of elements of
$\mathcal{A}_{n,\geq m, r}^{B}$ with $k+r$ cycles is called the $m$-\textit{associated $r$-Stirling number of type} $B$  and
is denoted by
$\begin{displaystyle} \begin{bmatrix} n \\ k \end{bmatrix}_{\geq m, r}^{B} \end{displaystyle}$.
Counting over all possible cycles gives the relation
\begin{equation}
A_{n,\geq m, r}^{B} = \sum_{k=0}^{n} \begin{bmatrix} n \\ k \end{bmatrix}_{\geq m, r}^{B}.
\end{equation}

In the cases $m=0, \,1$, there are no restrictions on the sign, so one can color any element in
 $2^{n+r}$ ways and then choose the blocks as the normal  $r-$Stirling numbers of the first kind. It 
 follows that
         $${n\brack 0}_{\geq m,r}^B=2^{n+r}{n\brack 0}_r=2^{n+r}n!\binom{n+r-1}{r-1}.$$

The case $m=2$ was described  in Theorem \ref{rec-short}.          
The next result gives a general recurrence relation. This can be used to analyze the situation for $m>2$. 

The symbol $Par_{\leq c}(a,b)$ denotes the number of composition of $a$ into $b$ positive 
parts, each part of 
size at most $c$, then 
$$Par_{\leq c}(a,b)=\begin{cases} 
0, & c\leq 0\text{ and }(a\neq 0\text{ or }b\neq 0); \\
      \displaystyle \sum _{i=0}^b(-1)^i\binom{b}{i}\binom{a-ci-1}{b-1}, & \text{otherwise}. 
   \end{cases}
$$
Introduce the function 
\begin{equation}
\tau_{m,n}(j) = \begin{cases}
2^{j+1} & \quad \text{if} \,\,\, m-1 \leq j \leq n \\
1 & \quad \text{otherwise}.
\end{cases}
\end{equation}

\begin{theorem}\label{recgeneral}
For $n\geq 0$, $k, r \geq 1$ and $m>2$, the recursion 
   \begin{equation*}
  {n+1\brack k }_{\geq m,r}^B = \sum _{j=0}^{n} j! \tau_{m,n}(j)  \binom{n}{j}{n-j \brack k-1}_{\geq m,r}^B 
  +r  \sum _{j=0}^{n} (j+1)! \tau_{m,n+1}(j+1)  \binom{n}{j}{n-j\brack k}^B_{\geq m,r-1},
  \end{equation*}
\noindent
holds. The initial conditions are given by 
         \begin{align*}
{n \brack 0}_{\geq m ,r}^B&=n!\sum _{p=0}^r\sum _{j=0}^p\binom{r}{p}\binom{p}{j}\sum _{k=0}^n2^{n+p-k-j}Par_{\leq m-2}(k,j)Par_{\geq m-1}(n-k,p-j). 
\end{align*}
\end{theorem}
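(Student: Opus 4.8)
The plan is to establish the recurrence and the initial condition by two separate direct counting arguments, both generalizing the combinatorial method already used for $m=2$ in Theorem \ref{rec-short}. Throughout, call a cycle \emph{admissible} when its order is at least $m$ or all of its elements are signed; consequently an admissible cycle of order $L$ admits $2^{L}$ colorings when $L\geq m$ and exactly one (the all-signed coloring) when $L<m$. This order-at-least-$m$ versus all-signed dichotomy is exactly what $\tau_{m,n}$ encodes, and tracking it correctly is the heart of the proof.

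For the recurrence I would single out the non-special element $n+1$ and split according to the cycle $\mathfrak{c}$ containing it. If $\mathfrak{c}$ contains no special element, then $\mathfrak{c}$ consists of $n+1$ together with $j$ of the remaining $n$ non-special elements $(0\leq j\leq n)$, chosen in $\binom{n}{j}$ ways, arranged cyclically in $j!$ ways, and colored in $\tau_{m,n}(j)$ ways, since $\mathfrak{c}$ has order $j+1$ and is freely colored precisely when $j+1\geq m$, that is $j\geq m-1$. Deleting $\mathfrak{c}$ removes one non-special cycle and leaves an admissible configuration counted by ${n-j\brack k-1}_{\geq m,r}^{B}$, giving the first sum. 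If instead $\mathfrak{c}$ contains a special element, there are $r$ choices for which special cycle $n+1$ enters; that cycle then comprises the special element, $n+1$, and $j$ further non-special elements, for total order $j+2$, contributing $\binom{n}{j}$ choices, $(j+1)!$ cyclic arrangements, and $\tau_{m,n+1}(j+1)$ colorings (free exactly when $j+2\geq m$). Deleting this special cycle lowers the number of special cycles to $r-1$ while preserving the $k$ non-special ones, producing ${n-j\brack k}_{\geq m,r-1}^{B}$ and hence the second sum. The two cases are exhaustive and disjoint, so their sum is the asserted recurrence; specializing to $m=2$ recovers Theorem \ref{rec-short}, a useful consistency check.

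For the initial condition, ${n\brack 0}_{\geq m,r}^{B}$ counts admissible signed permutations of $[n+r]$ with exactly $r$ cycles, so each special element occupies its own cycle and there are no further cycles. I would classify the $r$ cycles by how many non-special elements they absorb: a cycle is a \emph{singleton} $(\bar s)$ if it absorbs none (order $1<m$, forced signed), \emph{short} if it absorbs between $1$ and $m-2$ (order at most $m-1<m$, forced signed), and \emph{long} if it absorbs at least $m-1$ (order $\geq m$, freely colored). Writing $p$ for the number of non-singleton cycles and $j$ for the number of short ones, these roles are assigned in $\binom{r}{p}\binom{p}{j}$ ways; the short-cycle sizes form a composition of some $k$ counted by $Par_{\leq m-2}(k,j)$, and the long-cycle sizes a composition of $n-k$ counted by $Par_{\geq m-1}(n-k,p-j)$. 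Once the sizes are fixed, distributing the actual non-special elements (a multinomial $\tfrac{n!}{\prod(\text{sizes})!}$) times the cyclic arrangements inside the cycles (one factorial per cycle, with its special element as base point) collapses to exactly $n!$, independently of the composition. Finally the coloring is forced all-signed on the singleton and short cycles and free on the long ones, whose $(n-k)+(p-j)$ elements contribute $2^{n+p-k-j}$; summing over $k$, $j$, and $p$ yields the displayed formula.

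The binomial and factorial bookkeeping is routine; the one step demanding genuine care is the coloring count, namely the systematic application of the order-at-least-$m$ versus all-signed dichotomy both in selecting the correct value of $\tau_{m,n}$ in the recurrence and in pinning down the exponent $n+p-k-j$ in the initial condition. A secondary point is justifying the passage from the ordered compositions counted by the $Par$ functions (for the labeled special cycles) to the single clean factor $n!$, together with verifying the degenerate cases (empty sums, and $p=0$ forcing $n=0$) so that they correctly reproduce ${0\brack 0}_{\geq m,r}^{B}=1$.
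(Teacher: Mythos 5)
Your proposal is correct and follows essentially the same route as the paper: the recurrence is obtained by classifying the cycle containing $n+1$ according to whether it meets a special element and whether its length is $\geq m$ (the paper spells this out as four cases, which your $\tau_{m,n}$ bookkeeping merges into two), and the initial condition by the same singleton/short/long classification of the $r$ special cycles with the compositions counted by the $Par$ functions and the multinomial factors collapsing to $n!$. Your explicit justification of the coloring dichotomy and of the collapse to $n!$ is, if anything, slightly more careful than the paper's terse version.
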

\begin{proof}
The initial conditions are discussed first. For $m>2$, choose $r-p$ elements from the $r$ 
special elements and assign 
them to its own bar. This can be done in $\binom{r}{p}$ ways. From the 
remaining
$p$ special elements, choose $j$  of them which will contain cycles of size less than $m$ so the have to have all sign. This cane done in $\binom{p}{j}$ ways. Then choose $k$ elements out of 
$n$ to put in these $j$ cycles in  $\binom{n}{k}Par_{\leq m-2}(k,j)k!$ ways. The remaining $n-k$ elements 
will be in the $p-j$ cycles with length greater or equal to $m$, in  $Par_{\geq m-1}(n-k,p-j)$ ways. Note 
that $$Par_{\geq c}(a,b)=Par_{\geq 1}(a-(c-1)b,b)=\binom{a-(c-1)b-1}{b-1}. $$ The 
inclusion-exclusion principle gives the expression for 
$Par_{\leq c}(a,b)$. Adding over all possibilities for  $p,j,k$ gives the result.\\

The recursion is discussed next. The discussion is divided into cases:
\begin{enumerate}
    \item Either $n+1$ is in a cycle without special elements and of length $<m$.
    This can be done by selecting the $j$ elements which are in these cycles and taking care of the cyclic order, to produce  $$\binom{n}{j}j!{n-j\brack k-1}^B_{\geq m,r}.$$ 
    \item $n+1$ is in a cycle without special elements of length $\geq m$. As before, select  the $j$ elements 
    and choose their signs in $2^{j+1}$ ways.
    \item $n+1$ is in a cycle with a special element and of length $<m$. Select  $j$ elements and take care 
    of  the cyclic order to obtain
    $$r\binom{n}{j}(j+1)!{n-j\brack k}_{\geq m,r-1}.$$
    \item $n+1$ is in a cycle with a special element and of length $\geq m.$ Select  $j$ elements as before 
    and now colored them in $2^{j+2}$ ways.
    \end{enumerate}
    Summing over these options gives the result.
\end{proof}

\subsection{General case of Howard's identity of type $B$}

Howard \cite{howard-1982a} established several combinatorial identities involving binomial coefficients
 and Stirling number of both kinds. Caicedo et al. \cite{caicedo-2019a} gave combinatorial proofs and generalization for some of them. For example, the  identity
$${n \brack n-k} = \sum_{\ell=0}^{k}\binom{n}{2k-\ell}{2k - \ell \brack k- \ell}_{\geq 2}$$
\noindent
is such an example. The next statement provides a $B$-analogue.

\begin{theorem}
The identity 
     \begin{align*}
       {n\brack k}^B_{\geq m,r}&=\sum _{p=0}^r\sum _{l = 0 }^k \binom{n}{ml}\binom{r}{p}\binom{n-ml}{(m-1)p}\frac{(2^m-1)^{l+p}(ml)!((m-1)p)!}{m^ll!}{n-ml-(m-1)p\brack k-l}^B_{\geq m+1,r-p}  \\
       &=n!\sum _{p=0}^r\sum _{l = 0 }^k\binom{r}{p}\frac{(2^m-1)^{l+p}}{m^ll!(n-m(l+p)+p)!}{n-m(l+p)+p\brack k-l}^B_{\geq m+1,r-p}  \\
     \end{align*}
 \noindent
 holds. 
\end{theorem}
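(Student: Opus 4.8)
The plan is to prove the identity combinatorially by reading the left-hand side as the number of $m$-associated $r$-permutations of type $B$ enumerated by ${n \brack k}^B_{\geq m,r}$, and then decomposing each such permutation according to its cycles of order exactly $m$. The decisive observation is the following admissibility dichotomy: a cycle of order exactly $m$ is legal in the $\geq m$ regime for \emph{any} sign pattern, whereas in the $\geq m+1$ regime it is legal \emph{only} when it is entirely signed (that is, $c_i\subseteq[\bar n]$). Consequently the cycles that distinguish the two regimes are precisely the order-$m$ cycles that are not fully barred, and stripping these away turns a $\geq m$ permutation into a $\geq m+1$ permutation on the remaining elements. This is the bijection that will produce the first displayed formula.

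First I would let $l$ denote the number of non-special cycles of order $m$ that are not fully barred, and $p$ the number of special cycles of that same kind; then I would reconstruct the permutation from these pieces. For the $l$ non-special cycles I would choose their $ml$ underlying elements ($\binom{n}{ml}$ ways), partition them into $l$ unordered blocks of size $m$, and turn each block into a type $B$ cycle carrying a non-trivial sign pattern; a standard enumeration gives $\frac{(ml)!}{m^l l!}(2^m-1)^l$, where the factor $2^m-1$ discards the fully barred pattern and $m^l l!$ records the cyclic symmetry of each block and the fact that the $l$ blocks are unordered. For the $p$ special cycles I would select the $p$ special elements ($\binom{r}{p}$), the $(m-1)p$ non-special elements accompanying them ($\binom{n-ml}{(m-1)p}$), and form one order-$m$ cycle about each chosen special element with a non-trivial sign, yielding $((m-1)p)!\,(2^m-1)^p$. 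Everything left over -- the cycles of order $\geq m+1$ together with \emph{all} fully barred cycles, in particular the fully barred cycles of order $\leq m$ -- is exactly an $(m+1)$-associated $(r-p)$-permutation of type $B$ on the $n-ml-(m-1)p$ surviving non-special and $r-p$ special elements, contributing ${n-ml-(m-1)p \brack k-l}^B_{\geq m+1,r-p}$ (the index $k-l$ reflecting that only non-special order-$m$ cycles were removed). Multiplying these factors and summing over $0\le l\le k$ and $0\le p\le r$ reproduces the first identity.

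The second form is then pure algebra. Combining the chooser factors via $\binom{n}{ml}\binom{n-ml}{(m-1)p}(ml)!\,((m-1)p)!=\frac{n!}{(n-ml-(m-1)p)!}$, and using $ml+(m-1)p=m(l+p)-p$ to rewrite the upper index of the surviving bracket as $n-m(l+p)+p$, one recovers the claimed expression with its overall $n!$ prefactor and weight $\frac{(2^m-1)^{l+p}}{m^l l!\,(n-m(l+p)+p)!}$.

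The step I expect to require the most care is the cycle bookkeeping under the type $B$ sign convention. One must confirm that the admissibility dichotomy for an order-$m$ cycle really is ``any sign pattern'' versus ``fully barred,'' and that the number of non-fully-barred type $B$ cycles on a fixed block of $m$ elements is genuinely $(m-1)!(2^m-1)$, i.e. that no hidden symmetry among sign patterns (of the kind that conflates a cycle with its fully conjugated version) inflates or deflates this count. A useful consistency check is to specialize to $m=2$ and compare the resulting relation between the $\geq 2$ data of Theorem \ref{rec-short} (displayed in \eqref{matrix-1}) and the $\geq 3$ data, which should match numerically.
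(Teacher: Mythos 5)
Your proposal is correct and follows essentially the same route as the paper: both classify the order-$m$ cycles that are not fully barred into $l$ non-special and $p$ special ones, strip them off, and recognize the remainder as an $(m+1)$-associated $(r-p)$-permutation. The paper's proof merely names these two statistics and asserts the count, so your more explicit verification of the factors $\frac{(ml)!}{m^l l!}(2^m-1)^l$ and $\binom{r}{p}\binom{n-ml}{(m-1)p}((m-1)p)!(2^m-1)^p$, and of the algebraic reduction to the second form, is a faithful (and fuller) rendering of the intended argument.
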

\begin{proof}
Consider a type B $r$-permutation $\sigma \in \mathcal{A}^{B}_{n,\geq m,r}$ with $k+r$
 cycles and let $l$ be the number of cycles $\mathfrak{b}$ of size $m$ such that 
 $\mathfrak{b} \cap ([r]\cup [\bar r]) =\emptyset$ and $| \mathfrak{b}\cap [\overline{n+r}]|<m .$ Let 
 $p$ be the number of cycles with the above property with 
  $\mathfrak{b}\cap ([r]\cup [\bar r]) \neq \emptyset$.  The number of permutations with this two statistics are counted by
   the expression on the left-hand side. Adding for all cases of $p$ and $l,$ yields 
   the result.
\end{proof}

The special case  $r=0$ is stated next. 

\begin{corollary}
The identity 
     $${n \brack k}_{\geq m}^B=\sum _{l = 0}^k\binom{n}{ml}\frac{(ml)!}{m^ll!}(2^m-1)^l{n-ml\brack k-l}^B_{\geq m+1}$$
     holds. 
\end{corollary}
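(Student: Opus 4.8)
The plan is to obtain this identity as the specialization $r = 0$ of the preceding theorem, so essentially no new combinatorics is needed. Setting $r = 0$ forces the outer summation to collapse, since the constraint $0 \le p \le r$ leaves only the single term $p = 0$. First I would substitute $r = 0$ throughout the double sum and discard every term with $p \ge 1$.

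Next I would simplify the factors carrying a dependence on $p$ and $r$. With $p = 0$ one has $\binom{r}{p} = \binom{0}{0} = 1$, $\binom{n-ml}{(m-1)p} = \binom{n-ml}{0} = 1$, and $((m-1)p)! = 0! = 1$, while the exponent reduces via $(2^{m}-1)^{l+p} = (2^{m}-1)^{l}$. The surviving associated Stirling factor becomes ${n-ml-(m-1)p \brack k-l}^{B}_{\geq m+1,\, r-p} = {n-ml \brack k-l}^{B}_{\geq m+1,\, 0}$, so that the generic summand reduces to $\binom{n}{ml}\frac{(2^{m}-1)^{l}(ml)!}{m^{l}l!}{n-ml \brack k-l}^{B}_{\geq m+1,\, 0}$.

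The final step is to identify the $r = 0$ object with the unadorned type $B$ associated Stirling number. As recorded at the opening of this subsection, the $r = 0$ case of an $m$-associated $r$-permutation of type $B$ has no special elements, and hence ${n \brack k}^{B}_{\geq m',\, 0} = {n \brack k}^{B}_{\geq m'}$ for every admissible $m'$. Applying this identification on both sides collapses the expression to $\sum_{l=0}^{k} \binom{n}{ml}\frac{(ml)!}{m^{l}l!}(2^{m}-1)^{l}{n-ml \brack k-l}^{B}_{\geq m+1}$, which is precisely the asserted right-hand side.

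I expect no genuine obstacle here, as the whole argument is a direct specialization; the only point deserving care is the bookkeeping of the previous paragraph, namely verifying the $r = 0$ normalization that identifies the $r$-indexed Stirling numbers with their plain counterparts. As an alternative, one could give a self-contained combinatorial proof by repeating the argument of the preceding theorem with no special elements present, tracking only the statistic $l$ (the number of size-$m$ cycles meeting $[n]$ but not entirely contained in $[\bar n]$) and omitting the $p$-statistic entirely.
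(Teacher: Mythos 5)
Your proposal is correct and matches the paper's intent exactly: the paper presents this corollary as the special case $r=0$ of the preceding theorem with no separate argument, and your bookkeeping (collapsing the $p$-sum to $p=0$, simplifying the binomials and factorials to $1$, and identifying ${n\brack k}^B_{\geq m+1,0}$ with ${n\brack k}^B_{\geq m+1}$) is the right verification.
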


The case $m=1$ is similar. 

\begin{corollary}\label{Howard1}
For $n, k\geq 0$,  the identity
     \begin{align*}
       {n\brack k}^B_{r}&=\sum _{p=0}^r\sum _{\ell = 0 }^k\binom{r}{p}\binom{n}{\ell} {n-\ell \brack k-\ell}_{\geq 2, r-p},
     \end{align*}
holds. Moreover, if $r=0$ one obtains
          $${n \brack k}^B=\sum _{\ell=0}^k\binom{n}{\ell}{n-\ell \brack k-\ell}^B_{\geq 2}.$$
\end{corollary}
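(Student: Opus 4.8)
The plan is to give a direct combinatorial decomposition, which is precisely the specialization of the preceding theorem to $m=1$ but is cleaner to argue from scratch. The starting observation is that ${n \brack k}_r^B = {n \brack k}_{\geq 1,r}^B$ imposes no restriction on cycle lengths, so it counts all signed $r$-permutations of $[n+r]$ with $k+r$ cycles in which the $r$ special elements lie in distinct cycles. The only feature distinguishing such a permutation from a type $B$ $r$-derangement (a member of $\mathcal{A}^B_{\cdot,\geq 2,\cdot}$) is the possible presence of \emph{true} fixed points, that is, unbarred singleton cycles $(i)$ with $\sigma(i)=i$; recall that a barred singleton $(\bar i)$ is permitted in the $\geq 2$ setting because $\{\bar i\}\subseteq[\bar n]$ and hence is not a fixed point.

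The decomposition I would carry out classifies a permutation counted by ${n \brack k}_r^B$ according to its set of true fixed points, split into its special and non-special parts. Writing $p$ for the number of special true fixed points and $\ell$ for the number of non-special ones, I would first select these points in $\binom{r}{p}\binom{n}{\ell}$ ways. Each is an \emph{unbarred} singleton, so it carries no coloring freedom and contributes a factor $1$; this is exactly the source of $(2^m-1)^{l+p}=1$, $\tfrac{(ml)!}{m^l\,l!}=1$ and $\binom{n-ml}{(m-1)p}=1$ when one instead specializes the general theorem at $m=1$. Removing these $\ell+p$ singleton cycles leaves a signed permutation on $n-\ell$ non-special and $r-p$ special elements that has no true fixed points and still has its special elements in distinct cycles, hence is a type $B$ $(r-p)$-derangement.

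Next I would verify the bookkeeping of the cycle count. The ambient permutation has $r$ special cycles and $k$ non-special cycles; each removed special singleton deletes one special cycle and each removed non-special singleton deletes one non-special cycle, so the reduced permutation has $r-p$ special cycles and $k-\ell$ non-special cycles, and is therefore enumerated by ${n-\ell \brack k-\ell}_{\geq 2,\,r-p}^B$. The admissible ranges are exactly $0\le p\le r$ and $0\le \ell\le k$, and summing over them yields
\begin{equation*}
{n \brack k}_r^B = \sum_{p=0}^{r}\sum_{\ell=0}^{k}\binom{r}{p}\binom{n}{\ell}\,{n-\ell \brack k-\ell}_{\geq 2,\,r-p}^B,
\end{equation*}
while setting $r=0$ forces $p=0$ and recovers the stated specialization.

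The argument is short, and I expect the only genuinely delicate point to be the type $B$ fixed-point convention: one must peel off precisely the unbarred singletons while keeping every barred singleton inside the reduced object, so that the colorings of the surviving elements are already encapsulated in ${n-\ell \brack k-\ell}_{\geq 2,\,r-p}^B$ and are not double counted. Confirming that this peeling is reversible, namely that re-inserting $p$ special and $\ell$ non-special unbarred singletons into an arbitrary type $B$ $(r-p)$-derangement produces a valid permutation counted by the left-hand side, establishes the bijection that justifies the summation.
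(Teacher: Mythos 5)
Your proof is correct and follows essentially the same route as the paper: the paper obtains this corollary by specializing the general Howard-type identity at $m=1$, whose proof classifies cycles of size exactly $m$ that are not entirely barred, and at $m=1$ these are precisely the unbarred singletons (true fixed points) that you peel off. Your direct verification that the removed singletons carry no coloring freedom and that the cycle bookkeeping gives ${n-\ell \brack k-\ell}^B_{\geq 2,\,r-p}$ matches the paper's argument exactly.
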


\subsection{The Riordan matrices}
It turns out that the sequence ${n \brack k}_{\ge m, r}^B$ can be encoded by a Riordan matrix. 

\begin{theorem}\label{teorio}
The matrix $\mathcal{C}_{\geq m, r}:=\left({n \brack k}_{\ge m, r}^B\right)_{n, k \geq 0}$ is an exponential Riordan array given by
$$\mathcal{C}_{\geq m, r}=\left(\left(\frac{1-x^{m-1}}{1-x}-\frac{2^mx^{m-1}}{1-2x}\right)^r, -\ln(1-2x)-\sum_{\ell=1}^{m-1}\frac{2^k-1}{k}x^k\right).$$
\end{theorem}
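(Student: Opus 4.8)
The plan is to verify directly that the columns of $\mathcal{C}_{\geq m,r}$ have the exponential‑Riordan shape $g(x)f(x)^k/k!$ and to read off $g$ and $f$ from a cycle decomposition, exactly as in the proof of Theorem \ref{teoriob}. Writing $C_k(x)=\sum_{n\geq 0}{n\brack k}_{\geq m,r}^B \frac{x^n}{n!}$ for the generating function of the $k$-th column in the number $n$ of non-special elements, I would split every $\sigma\in\mathcal{A}^B_{n,\geq m,r}$ into its \emph{special part}, the $r$ cycles carrying the $r$ labeled special elements, and its \emph{non-special part}, the $k$ cycles built from non-special elements only. Since the $n$ non-special elements are freely distributed between the two parts, the labeled product rule for exponential generating functions gives $C_k(x)=g(x)\cdot f(x)^k/k!$, where $g$ is the EGF of the special part, $f$ is the EGF of a single admissible non-special cycle, and the $1/k!$ records that the $k$ non-special cycles are unordered. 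This is precisely the Riordan column form, so it remains only to compute $g$ and $f$.

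The core of the argument is enumerating admissible type $B$ cycles of a given order $j$. A type $B$ cycle on $j$ underlying labels admits $(j-1)!$ cyclic arrangements and $2^j$ sign patterns, hence $(j-1)!2^j$ versions, while the fully barred cycles (those with $c_i\subseteq[\bar n]$) use exactly one sign pattern, hence $(j-1)!$ versions; as a sanity check, $\sum_{j\geq 1}\frac{(j-1)!2^j}{j!}x^j=-\ln(1-2x)$, whose exponential $\frac{1}{1-2x}$ is the EGF of all $2^n n!$ signed permutations. The admissibility rule ``$ord(c_i)\geq m$ or $c_i\subseteq[\bar n]$'' then makes a non-special cycle of order $j$ contribute $\frac{2^j}{j}x^j$ for $j\geq m$ and $\frac{1}{j}x^j$ for $1\leq j\leq m-1$. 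Summing and using $\sum_{j\geq 1}\frac{(2x)^j}{j}=-\ln(1-2x)$ yields
\[
f(x)=\sum_{j=1}^{m-1}\frac{x^j}{j}+\sum_{j\geq m}\frac{2^j x^j}{j}=-\ln(1-2x)-\sum_{\ell=1}^{m-1}\frac{2^\ell-1}{\ell}x^\ell,
\]
the second entry of the claimed pair; here $f(0)=0$ and $f'(0)=2-(2^1-1)=1\neq 0$, so $f$ is a legitimate Riordan delta series.

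For the first entry I would treat a single special cycle, which carries one fixed special label together with $t\geq 0$ non-special elements, so its order is $j=t+1$. The rule permits all $2^{t+1}$ sign patterns when $t+1\geq m$ and only the fully barred pattern when $t+1<m$, and the $(j-1)!=t!$ cyclic orders cancel the $t!$ in the exponential weight, so the EGF of one special cycle in the non-special variable is
\[
g_1(x)=\sum_{t=0}^{m-2}x^t+\sum_{t\geq m-1}2^{t+1}x^t=\frac{1-x^{m-1}}{1-x}+\frac{2^m x^{m-1}}{1-2x}.
\]
Because the $r$ special labels are distinguishable and each spawns its own cycle, the special part is a labeled product of $r$ copies, so $g(x)=g_1(x)^r$, which is the first entry of the pair. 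Combining the two computations gives $\mathcal{C}_{\geq m,r}=(g,f)$.

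The step I expect to be most delicate is the sign and coloring bookkeeping for the special cycles, and the decisive consistency check is the case $m=2$ against Theorem \ref{teoriob}: there $g_1(x)=1+\frac{4x}{1-2x}=\frac{1+2x}{1-2x}$ and $f(x)=-\ln(1-2x)-x$, in exact agreement. This check indicates that the first component should read $\left(\frac{1-x^{m-1}}{1-x}+\frac{2^m x^{m-1}}{1-2x}\right)^r$ with a plus sign, the minus sign in the displayed statement being a typographical slip, since the fully barred short cycles and the freely signed long cycles both \emph{add} to the special-part count rather than cancel.
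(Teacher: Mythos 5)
Your proof is correct and is exactly the argument the paper itself uses for the $m=2$ case (Theorem \ref{teoriob}): the paper states the general theorem without proof, so your cycle-by-cycle EGF computation (fully barred cycles of order $<m$ contributing $x^j/j$, freely signed cycles of order $\geq m$ contributing $(2x)^j/j$, and the labeled product over the $r$ distinguishable special cycles and $k$ unordered non-special cycles) supplies the missing argument. You are also right that the first component must be $\left(\frac{1-x^{m-1}}{1-x}+\frac{2^m x^{m-1}}{1-2x}\right)^r$ with a plus sign: the minus sign in the displayed statement is a typo, as confirmed both by your $m=2$ check against Theorem \ref{teoriob} and by the $m=1$ value ${n\brack 0}_{\geq 1,r}^B=2^{n+r}n!\binom{n+r-1}{r-1}$ recorded earlier in the paper (and the summation index in the second component should read $k$ throughout).
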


Theorem \ref{diagonals}  can be generalized as follows. The proof is left to the reader. 

\begin{theorem} If $m>0,$ the two diagonals below the main diagonal are 
\begin{align*}
    {n+1\brack n}^B_{\geq m,r}&=2^{(n+r+1)\delta _{m,1}+2\delta _{m,2}-1}(n+1)(n+2r),\\
    {n+2\brack n}^B_{\geq m, r}&=\frac{2^{(n+r+2)\delta _{m,1}}}{12}\binom{n+2}{2}\left (3 \cdot 
    2^{4\delta _{m,2}}(4r(r+n-1)+n(n-1))+2^{3(\delta _{m,2}+\delta _{m,3})+3}(n+3r)\right )
\end{align*}
where $\delta _{a,b}$ is the Kronecker delta function.
\end{theorem}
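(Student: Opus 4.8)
The plan is to count directly, exploiting that ${n+j \brack n}^B_{\geq m,r}$ enumerates type $B$ $m$-associated $r$-permutations on the $(n+j)+r$ element set ($n+j$ non-special symbols, $r$ special ones) having exactly $n+r$ cycles. Since the number of elements exceeds the number of cycles by $j$, and a cycle of shorthand-order $p$ contributes $p-1$ to the quantity $\sum_i (ord(c_{i})-1)=(\#\text{elements})-(\#\text{cycles})$, the total excess equals $j$. Hence for the first sub-diagonal ($j=1$) exactly one cycle has order $2$ and all others are singletons, whereas for the second sub-diagonal ($j=2$) the non-singleton part is either a single order-$3$ cycle or a pair of order-$2$ cycles. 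First I would record the elementary fact that the number of signed shorthand cycles on a fixed set of $p$ unsigned symbols is $(p-1)!\,2^{p}$ with free coloring and $(p-1)!$ when the cycle is required to be fully barred. By the $m$-associated rule (order $\ge m$ or $c_i\subseteq[\bar n]$), a cycle of order $p<m$ must be fully barred (weight $(p-1)!$) while a cycle of order $p\ge m$ is colored freely (weight $(p-1)!\,2^{p}$); for $m\ge2$ every singleton is forced to be $(\bar\jmath)$, and the case $m=1$ imposes no sign restriction, contributing a global factor $2^{\#\text{elements}}$ times the ordinary $r$-Stirling count.

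For the first sub-diagonal I would split on whether the unique order-$2$ cycle contains a special element. A special element with a non-special partner gives $r(n+1)$ placements, two non-special elements give $\binom{n+1}{2}$, and in each case the cycle carries a coloring factor $2^{2}$ exactly when $m\le 2$, recorded as $2^{2\delta_{m,2}}$ (with $m=1$ absorbed into the global $2^{n+r+1}$). Summing, $r(n+1)+\binom{n+1}{2}=\tfrac12(n+1)(n+2r)$, and attaching the coloring exponents yields $2^{(n+r+1)\delta_{m,1}+2\delta_{m,2}-1}(n+1)(n+2r)$.

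For the second sub-diagonal I would treat the two shapes separately. In shape (I) the order-$3$ cycle holds either no special element ($\binom{n+2}{3}$ choices) or one special with two non-special symbols ($r\binom{n+2}{2}$ choices); each order-$3$ cycle contributes the cyclic factor $2!=2$ and, since $3\ge m$ precisely when $m\le3$, a coloring factor $2^{3}$ recorded as $2^{3(\delta_{m,2}+\delta_{m,3})}$, collapsing to $\tfrac{2}{3}\binom{n+2}{2}(n+3r)$ up to that exponent. In shape (II) I would split by the number of special elements among the four symbols involved: zero gives $3\binom{n+2}{4}$, one gives $r(n+2)\binom{n+1}{2}$, and two gives $\binom{r}{2}(n+2)(n+1)$ (the distinct-cycle condition forbidding two specials in one cycle); these sum to $\binom{n+2}{2}\bigl(\tfrac14 n(n-1)+r(n+r-1)\bigr)$, and each pair of order-$2$ cycles carries the coloring factor $2^{4}$ exactly when $m\le2$, recorded as $2^{4\delta_{m,2}}$. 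Combining the two shapes over the common factor $\tfrac1{12}\binom{n+2}{2}$ produces the stated expression, with the overall $2^{(n+r+2)\delta_{m,1}}$ again handling $m=1$.

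The binomial simplification is routine; the main obstacle is bookkeeping the coloring and cyclic-arrangement weights uniformly across the four regimes $m=1$, $m=2$, $m=3$, $m\ge4$ so that they coalesce into the single Kronecker-delta expression, together with carrying out the special-element placements in shape (II) without double counting. As a consistency check one sets $m=2$ and recovers Theorem \ref{diagonals}: the exponent $2\delta_{m,2}-1=1$ reproduces the factor $2$ in the first sub-diagonal, while $2^{4\delta_{m,2}}=16$ and $2^{3(\delta_{m,2}+\delta_{m,3})+3}=2^{6}$ reproduce the coefficient $\tfrac43\binom{n+2}{2}(3n^2+n+12nr+12r^2)$ in the second.
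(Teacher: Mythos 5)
Your proof is correct and follows the same route the paper intends: the paper proves the $m=2$ case (Theorem \ref{diagonals}) by exactly this decomposition into "one cycle of excess order" versus "a $3$-cycle or two $2$-cycles," and explicitly leaves the general-$m$ version to the reader. Your bookkeeping of the coloring weights ($2^p$ for a free cycle of order $p\ge m$, weight $1$ for a forced fully-barred cycle, and the global $2^{n+r+j}$ when $m=1$) is consistent with the paper's conventions, and your counts specialize correctly to the stated $m=2$ formulas.
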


\medskip

\noindent
{\bf Acknowledgements}. 

The first author thanks for hospitality the  Department of Mathematics of Universidad Nacional de Colombia, Bogot\'a, Colombia, where the presented work was initiated. The third author thanks the Department of Mathematics of the Tulane University, New Orleans.   The last author is a graduate 
student at Tulane 
University.  The research of the third author was partially supported by the Universidad Nacional de Colombia, Project No. 46240.

\bibliography{/Users/vmh/Dropbox/AllRef/official1}
\bibliographystyle{plain}
\end{document}

\end{document}